\DeclareMathAlphabet{\mathpzc}{OT1}{pzc}{m}{it}
\newcommand\C{\mathbb{C}}
\newcommand\Z{\mathbb{Z}}
\newcommand\N{\mathbb{N}}
\newcommand\ch{\mathrm{ch}}
\newcommand\g{\mathfrak{g}}
\newcommand\fm{\mathfrak{m}}
\newcommand\fq{\mathfrak{q}}
\newcommand\fh{\mathfrak{h}}
\newcommand\fn{\mathfrak{n}}
\newcommand\fa{\mathfrak{a}}
\newcommand\fsl{\mathfrak{sl}}
\newcommand\fgl{\mathfrak{gl}}
\newcommand\fu{\mathfrak{u}}
\newcommand\fk{\mathfrak{k}}
\newcommand\hg{\hat{\mathfrak{g}}}
\newcommand\hh{\hat{\mathfrak{h}}}
\newcommand\hm{\hat{\mathfrak{m}}}
\newcommand\hk{\hat{\mathfrak{k}}}
\newcommand\hb{\hat{\mathfrak{b}}}
\newcommand\hs{\hat{\mathfrak{s}}}
\newcommand\hr{\hat{\mathfrak{r}}}
\newcommand\D{\Delta}
\newcommand\cH{\mathcal{H}}
\newcommand\tcH{\widetilde{\mathcal{H}}}
\newcommand\cS{\mathcal{S}}
\newcommand\cW{\mathcal{W}}
\newcommand\ba{\mathbf{a}}
\newcommand\bb{\mathbf{b}}
\newcommand\bc{\mathbf{c}}
\newcommand\bd{\mathbf{d}}
\newcommand\bg{\mathbf{g}}
\newcommand\bk{\mathbf{k}}
\newcommand\bi{\mathbf{i}}
\newcommand\bp{\mathbf{p}}
\newcommand\bU{\mathbf{U}}
\newcommand\bm{\mathbf{m}}
\DeclareMathOperator{\Span}{Span}
\DeclareMathOperator{\LinSpan}{LinSpan}
\DeclareMathOperator{\Id}{Id}
\DeclareMathOperator{\Ind}{Ind}
\DeclareMathOperator{\ad}{ad}
\DeclareMathOperator{\tr}{tr}
\theoremstyle{plain}
\newtheorem{theo}{Theorem}[section]
\newtheorem*{theo*}{Theorem}
\newtheorem{prop}[theo]{Proposition}
\newtheorem{lem}[theo]{Lemma}
\newtheorem{cor}[theo]{Corollary}
\theoremstyle{definition}
\newtheorem*{rem*}{Remark}
\newtheorem{rem}[theo]{Remark}
\newtheorem{example}[theo]{Example}
\numberwithin{equation}{section}
\newcommand{\comments}[1]{ \begin{center} \color{blue} \parbox{5 in}{{\bf {\footnotesize Comments:  }}{\footnotesize \textit{#1}}} \end{center}}
\newcommand{\comments}[1]{}
\newcommand{\details}[1]{\smallskip \color{blue} \begin{footnotesize} \textbf{Details:} #1 \end{footnotesize} \color{black}}
\newcommand{\details}[1]{}
\begin{document}
%

\title[Non-standard Verma type modules for $\fq(n)^{(2)}$]
{Non-standard Verma type \\ 
modules for $\fq(n)^{(2)}$}

\title{Non-standard Verma type modules for $\fq(n)^{(2)}$}
\author{Lucas Calixto}
\address{Department of Mathematics\\
		   Federal University of Minas Gerais\\
		   Belo Horizonte \\
		    Brazil}
		   \email{lhcalixto@ufmg.br}	
\author{Vyacheslav Futorny}
\address{Institute of Mathematics and Statistics \\
			 University of S\~ao Paulo \\
			 S\~ao Paulo \\
			 Brazil}
\email{futorny@ime.usp.br}
\subjclass{Primary 17B67}
\date{}




\maketitle

\begin{abstract}
We study non-standard Verma type modules over the Kac-Moody queer Lie superalgebra $\mathfrak{q}(n)^{(2)}$. We give a sufficient condition under which such modules  are irreducible. We also give a classification of all irreducible diagonal $\mathbb{Z}$-graded modules over certain Heisenberg Lie superalgebras contained in $\mathfrak{q}(n)^{(2)}$.
\end{abstract}

\section*{Introduction}
Kac-Moody algebras and their representations  play a very important role in many areas of mathematics and  physics. The "super" version of these algebras was introduced in \cite{Kac77}. Affine Kac-Moody superalgebras are those of finite growth. Affine symmetrizable superalgebras were described in \cite{Ser11} and \cite{Van89}. Theory of Verma type modules for  affine Lie superalgebras  was developed in \cite{EF09}  and 
\cite{CF18}. In particular, given a Borel subsuperalgebra $\hb$ of the affine Lie superalgebra  $\hg$ and a $1$-dimensional representation $\C_{\lambda}$ of $\hb$ for some weight $\lambda$ of the Cartan subalgebra of $\hg$, 
one can construct the \emph{Verma type module}
	\[
M_{\hb}(\lambda) := \Ind^{\hg}_{\hb} \C_{\lambda}.
	\]
This module  admits a unique maximal proper submodule, and thus, a unique simple quotient.  The Verma type module  is \emph{non-standard} if 
$\hb$ does not contain all positive root subspaces for some basis of the root system of $\hg$.  In the case the finite-dimensional Lie superalgebra  associated to $\hg$ is a contragredient Lie superalgebra, all Borel subsuperalgebras of $\hg$ were described in  \cite{CF18}, see also \cite{DFG09}. The paper  \cite{CF18} also gives a criterion for the irreducibility for non-standard Verma type module. 

 Non-symmetrizable affine Lie superalgebras were classified in \cite{HS07}. In particular, this classification includes a degenerate family of affine Lie superalgebras, series $\fq(n)^{(2)}$. These superalgebras  are twisted affinizations of queer Lie superalgebras $\fq(n)$.  Structure of Verma modules (= standard Verma type modules) over the twisted affine superalgebra $\fq(n)^{(2)}$ with $n\geq 3$ was studied in \cite{GS08}. The current paper advances the theory of Verma type modules for the affine queer Lie superalgebra. We establish sufficient conditions for the irreducibility of all non-standard Verma type modules (Theorem \ref{thm-extmain} and Theorem \ref{thm-main}).  We also consider  modules induced from analogs of Heisenberg subsuperalgebra and give a criterion of their irreducibility (Theorem \ref{thm-Heis}, Corollary \ref{cor:X=empty-irr.criterion}).

\medskip

\noindent {\textbf{Notation}} The ground field is $\C$. All vector spaces, algebras, and tensor products are considered to be over $\C$, unless otherwise stated. For a vector space $V$ we denote by $\Lambda (V)$ its Grassmann algebra (i.e., its exterior algebra). For any Lie superalgebra $\fa$ we let $\bU(\fa)$ denote its universal enveloping algebra. 

\renewcommand\thesubsection{\arabic{subsection}}

\section{Preliminaries}

Let $\fq=\fq(n)$ for $n\geq 3$, be the queer Lie superalgebra, that is, 
	\[
\fq := \lbrace \left(\begin{array}{c|c}
   A & B  \\
    \hline
    B & A
  \end{array}\right)\mid A\in \fgl(n+1), B\in \fsl(n+1) \rbrace.
	\]
Let $\fq_0$ and $\fq_1$ be the even and odd parts of $\fq$, respectively. Choose a Cartan subalgebra $\fh_\fq = \fh_{0}\oplus \fh_1$ of $\fq$ (i.e. $\fh_0$ a Cartan subalgebra of $\fq_0$) and let $\fq=\fh_\fq\oplus\left(\bigoplus_{\alpha\in \dot{\D}} \fq^\alpha\right)$ be the root space decomposition of $\fq$, where $\fq^\alpha$ denotes the root space associated to the root $\alpha\in \dot{\D}\subseteq \fh_0^*$. Recall that every root of $\dot{\D}$ is both even and odd, meaning that, for any $\alpha\in \dot{\D}$, $\fq^\alpha\cap \fq_i\neq 0$, for $i=0,1$. Recall also that $\dot{\D}=\dot{\D}_0=\dot{\D}_1 = \{\varepsilon_i-\varepsilon_j\mid i\neq j\}$. 

Although neither $\fq$ nor its affinization $\fq^{(1)}$ are Kac-Moody Lie superalgebras, i.e. admit a set of simple generators, after a twist of $\fq^{(1)}$ by an involution we obtain a regular quasisimple  Kac-Moody superalgebra $\hg:=\fq^{(2)}$ (see \cite{Ser11}). As a super vector space we have that
	\[
\hg = L(\fsl(n))\oplus \C K\oplus \C D,\quad \hg_0 = \fsl(n)\otimes \C [t^{\pm2}]\oplus \C K\oplus \C D, \quad \text{and}\quad \hg_1 = \fsl(n)\otimes t\C [t^{\pm2}],
	\]
where for any Lie superalgebra $\fk$, $L(\fk) := \fk\otimes \C [t^1, t^{-1}]$ is its associated loop superalgebra, $K$ is a central element, and, for all $x(k):=x\otimes t^k\in L(\fk)$ with $x\in \fk$ and $k\in \Z$, we have $[D, x(k)]=kx(k)$. Let $\g = \fsl(n)$. Then for any $x, y\in \g$, the bracket of $\hg$ is given as follows:
	\[
[x(k), y(m)] = (xy-yx)(k+m),
	\]
if $km$ is even; and if we define $\iota:\fgl(n)\to \fsl(n)$ by $x\mapsto x-\frac{\tr(x)}{n} I_n$ where $I_n$ is the $n\times n$ identity matrix, then
	\[
[x(k), y(m)] = \iota(xy+yx)(k+m) + 2\delta_{-k, m}\tr(xy)K,
	\]
if $km$ is odd. Notice that $K$ does not lie in $[\hg_0, \hg]$, but it lies in $[\hg_1, \hg_1]$. For convenience we set 
	\[
(xy-yx) := [x,y]_0,\text{ and } \iota(xy+yx) := [x,y]_1.
	\]
Hence, in this notation we have that
	\[
[x(k), y(m)] = [x,y]_0(k+m),\text{ and } [x(k), y(m)] = [x,y]_1(k+m) + 2\delta_{-k, m}\tr(xy)K,
	\]
if $km$ is even/odd, respectively. 

\begin{rem}
Notice that if we assume $m\in 2\Z$, then the bracket between any two elements $x(m), y(k)\in L(\g)$ reduces to the bracket in the loop Lie algebra $L(\g)$.
\end{rem}

Fix a Cartan subalgebra of $\hg$
	\[
\hh := \fh\otimes 1 \oplus \C K\oplus \C D
	\]
where $\fh$ is the Cartan subalgebra of diagonal matrices in $\g$, and for each $\alpha\in \dot{\D}$, choose $f_\alpha\in \g^{-\alpha}$, $e_\alpha\in \g^\alpha$ and $h_\alpha\in \fh$ such that $[f_\alpha, e_\alpha]_0 = h_\alpha$. 

Notice that, for $g_{\varepsilon_i - \varepsilon_j}\in \g^{\varepsilon_i - \varepsilon_j}$, we have
	\[
[h, g_{\varepsilon_i - \varepsilon_j}]_1 = (\varepsilon_i + \varepsilon_j)(h)g_{\varepsilon_i - \varepsilon_j},\text{ for all } h.
	\]
For simplicity, if $\alpha = \varepsilon_i - \varepsilon_j$, then we set ${\bar \alpha} := \varepsilon_i + \varepsilon_j$. Thus, in this notation, we have that
	\[
[h, g_\alpha]_1 = {\bar \alpha}(h)g_\alpha,\text{ for all } h\in \fh.
	\]
Moreover, if $\alpha_i\neq -\alpha_j$, then
	\[
[e_{\alpha_i}, f_{\alpha_j}]_1 = g_{\alpha_i + \alpha_j},
	\]
where $g_{\alpha_i+\alpha_j} = 0$ if $\alpha_i+\alpha_j\notin \dot{\D}$, $g_{\alpha_i+\alpha_j} = f_{\alpha_i+\alpha_j}$ if $\alpha_i+\alpha_j\in \dot{\D}^-$ and $g_{\alpha_i+\alpha_j} = e_{\alpha_i+\alpha_j}$ if $\alpha_i+\alpha_j\in \dot{\D}^+$. Finally, for $\alpha = \varepsilon_i - \varepsilon_j$ we have
	\[
[e_{\alpha}, f_{\alpha}]_1 = \iota(h_{\alpha}'),
	\]
where $h_{\alpha}' = E_{i,i} + E_{j,j}$.

If we identify $K$ with $(1/n) I_n$, then $\fh\otimes 1\oplus \C K$ can be identified with the Cartan subalgebra of diagonal matrices of $\fgl(n)$. Let $H_1,\ldots, H_n$ denote the standard basis of it (i.e. $H_i = E_{ii}$). The root system of $\hg$ with respect to $\hh$ is given by $\D = \{\alpha + k\delta,\ m\delta\mid \alpha\in \dot{\D},\ k\in \Z,\ m\in \Z\setminus\{0\}\}$. Moreover, $p(\alpha + k\delta) = p(k)$ and $p(m\delta) = p(m)$, where $p(k)$ denotes the parity of $k$, and by abuse of notation we are denoting the parity of a root $\beta$ also by $p(\beta)$. Finally, for a subalgebra $\fa\subseteq \hg$ we set 
	\[
\D(\fa) := \{\alpha\in \D\mid \hg_\alpha\subseteq \fa\}.
	\]

Consider the subalgebra $\widehat{\cH} = \widehat{\cH}_0\oplus \widehat{\cH}_1$ generated by the imaginary root spaces of $\hg$. Then
	\[
\widehat{\cH}_0 = \sum_{r\in \Z} \fh\otimes t^{2r}\oplus \C K,\quad \widehat{\cH}_1 = \sum_{r\in \Z} \fh\otimes t^{2r+1}.
	\]
Notice that the center of $\widehat{\cH}$ equals to $\widehat{\cH}_0$, the odd part $\widehat{\cH}_1$ is spanned by $\{(H_i - H_{i+1})(2r+1)\mid r\in \Z\}$ and the relations in $\widehat{\cH}_1$ are given by
	\[
[x(2r+1), y(-2r-1)] = 2xy,\quad  [x(2r+1), y(2s+1)] = \iota(2xy)(2(r+s+1))
	\]
for $r+s+1\neq 0$. In particular, differently from the case of basic classical Lie superalgebras, the subalgebra $\widehat{\cH}$ is not isomorphic to a Heisenberg algebra.

\section{Generalized Verma type modules}

Since the root system $\D$ of $\hg$ is the same as that of $\widehat{\fsl}(n)$, the sets of positive roots of $\D$ are obtained in the same way: fix $\Pi\subseteq \dot{\D}$ a set of simple roots, pick a subset $X\subseteq \Pi$, and let $\cW$ denote the Weyl group of $\widehat{\fsl}(n)$. Let $\dot{\D}^+ = \langle \Pi\rangle_{\Z_{>0}}\cap \dot{\D}$, $\dot{\D}(X)^+ = \langle X\rangle_{\Z_{>0}}\cap \dot{\D}$, and $\dot{\D}(X) = \langle X\rangle_{\Z}\cap \dot{\D}$.  Associated to $X$ we define
\begin{align*}
\D(X)^+ & := \{\alpha+k\delta\mid \alpha\in \dot{\D}^+\setminus \dot{\D}(X)^+,\ k\in \Z\} \\ & \cup \{\alpha+k\delta\mid \alpha\in \dot{\D}(X)\cup \{0\},\ k\in \Z_{>0}\}\cup \dot{\D}(X)^+.
\end{align*}
Then $\D(X)^+$ is a set of positive roots of $\D$, and up to $\cW \times \{\pm 1\}$-conjugation, every set of positive roots is of this form for some set of simple roots $\Pi$ and some subset $X\subseteq \Pi$. 

Consider the following subalgebras associated to $X$:
\begin{enumerate}
\item []
	\[
\fm(X) := \fm(X)^-\oplus \fh\oplus \fm(X)^+, \quad \fm(X)^\pm :=\bigoplus_{\alpha\in \dot{\D}(X)^\pm} \g_\alpha.
	\]
\item []
	\[
\fu(X)^\pm :=\bigoplus_{\alpha\in \dot{\D}^\pm \setminus \dot{\D}(X)^\pm} \g_\alpha.
	\]
\end{enumerate}
Thus 
	\[
 \g = \fu(X)^-\oplus \fm(X)\oplus \fu(X)^+\quad \text{and}\quad \hg = L(\fu(X)^-)\oplus \hm(X)\oplus L(\fu(X)^+),
	\]
where $\hm(X) = L(\fm(X))\oplus \C K \oplus \C D$.

Consider now the subalgebra
	\[
\fk(X) := \fm(X)^-\oplus \fh_X\oplus \fm(X)^+,\text{ where } \fh_X := \bigoplus_{\alpha\in \dot{\D}(X)^+} [\g_{-\alpha}, \g_\alpha].
	\]
Then $\fm(X) = \fk(X)\oplus \fh^X$, where $\fh^X := \{h\in \fh\mid \alpha(h)=0,\ \forall \alpha\in \dot{\D}(X)\}$ is the center of $\fm(X)$. Set
	\[
\hk(X) := L(\fk(X))\oplus \C K\oplus \C D\oplus \fh^X
	\]
with standard triangular decomposition
	\[
\hk(X)= \hk(X)^-\oplus \hh\oplus \hk(X)^+,\quad \hk(X)^\pm = (\fk(X)\otimes t^{\pm 1}\C[t^{\pm 1}])\oplus \fm(X)^\pm.
	\]
In particular, we have that
	\[
\hm(X) = \fh^X\otimes t^{-1}\C[t^{-1}]\oplus (\hk(X)^- \oplus \hh \oplus  \hk(X)^+)\oplus \fh^X\otimes t \C[t]
	\]
and 
	\[
\hg = (L(\fu(X)^-)\oplus \fh^X\otimes t^{-1}\C[t^{-1}])\oplus \hk(X)^- \oplus \hh \oplus  \hk(X)^+\oplus (\fh^X\otimes t \C[t]\oplus L(\fu(X)^+)).
	\]

\begin{rem}
\begin{enumerate}
\item Differently from the case of basic classical Lie superalgebras (this includes all simple Lie algebras), the imaginary subalgebra 
	\[
\cH(X) := (\fh^X\otimes t^{-1}\C[t^{-1}])\oplus (\fh^X\oplus \C K) \oplus (\fh^X\otimes t\C[t])
	\] 
is not a Heisenberg algebra. Another difference (from the Lie algebra case) is that we do not have that $[\cH(X), \hk(X)]=0$. In fact, 
	\[
[\cH_{0}(X), \hk(X)]= [\cH(X), \hk(X)_0] = 0, \text{ but } [\cH_{1}(X), \hk(X)_1] \neq 0.
	\]
Compare also with the isotropic case of \cite{CF18}.
\item Heisenberg algebras admit a family of triangular decompositions parametrized by maps $\varphi:\N\to \{\pm\}^d$, where $d$ is a certain dimension. It is worth noting that the algebra $\cH(X)$ does not admit such decompositions, except the trivial ones (i.e. when $\varphi(i)=(+,\ldots, +)$ for all $i\in \N$, or $\varphi(i)=(-,\ldots, -)$ for all $i\in \N$).
\end{enumerate} 
\end{rem}

Consider the triangular decomposition of $\cH(X)$
	\[
\cH(X) = \cH(X)^-\oplus (\fh^X\oplus \C K) \oplus \cH(X)^+,
	\]
where 
	\[
\cH(X)^\pm := \fh^X\otimes t^{\pm1}\C[t^{\pm1}],
	\]
and define $\cH(X)_i^\pm := \cH(X)_{i}\cap \cH(X)^\pm$, for $i\in \Z_2$. Then we have a commutative algebra 
	\[
\cS(X) := \bU(\cH(X)_0^-),
	\]
and we let $\cS(X)^+$ denote the augmentation ideal of $\cS(X)$.

Consider the triangular decompositions
	\[
\hm(X) = \hm(X)^- \oplus \hh\oplus \hm(X)^+,\quad \text{where}\quad \hm(X)^\pm = \cH(X)^\pm \oplus \hk(X)^\pm
	\]
and 
	\[
\hg = \hg(X)^-\oplus \hh\oplus \hg(X)^+,\quad \text{where}\quad \hg(X)^\pm = L(\fu(X)^\pm)\oplus \hm(X)^\pm.
	\]

Fix the subalgebra $\hb(X) :=\hh\oplus \hg(X)^+$ of $\hg$. Notice that $\hg(X)^+\cap \hm(X) = \hm(X)^+$, $\hg(X)^+\cap \hk(X) = \hk(X)^+$, and $\hg(X)^+\cap \cH(X) = \cH(X)^+$. In what follows, we fix a set $X\subseteq \Pi$ and we drop the $X$ from the notation above (for instance, we write $\hm^+$ instead of writing $\hm(X)^+$).

Let $\lambda\in \hh^*$, $\hs\in \{\hg, \hm, \hk, \cH\}$, and $\hr = \hs\cap \hb$. Then we define the Verma $\hs$-module
	\[
M(\hs, \lambda) := \bU(\hs)\otimes_{\bU(\hr)} \C v_\lambda,
	\]
where $\C v_\lambda$ is the $\hr$-module whose action of $\hh$ is determined by $\lambda$ and the action of the nilpotent radical of $\hr$ is trivial. The unique irreducible quotient of $M(\hs, \lambda)$ will be denoted $L(\hs, \lambda)$. Also, for $\hs, \hr$ such that either $\hr = \hk$ and $\hs = \hm$, or $\hr = \hm$ and $\hs = \hg$, and an $\hr$-module $N$ we define the module
	\[
M(\hs, \hr; N) := \bU(\hs)\otimes_{\bU(\hr)} N,
	\]
where $\cH^+$ is assumed to act trivially on $N$ if $\hr = \hk$ and $\hs = \hm$, and $L(\fu^+)$ is assumed to act trivially on $N$ if $\hr = \hm$ and $\hs = \hg$.  Notice that 
	\[
M(\hg, \lambda)\cong M(\hg, \hm; M(\hm, \lambda))\text{ and } M(\hm, \lambda)\cong M(\hm, \hk; M(\hk, \lambda)).
	\]

Using the terminology of \cite{Fut97}, the module $M(\hg, \hm; N)$ is called a \emph{generalized Verma type module}, or a \emph{generalized Imaginary Verma module}. When $N$ is an irreducible weight $\hr$-module, $M(\hs, \hr; N)$ admits a unique irreducible quotient which will be denoted by $L(\hs, \hr; N)$.

\section{Irreducible $\cH$-modules} 
Consider the triangular decomposition
	\[
\cH = \cH^-\oplus \C K\oplus \cH^+. 
	\]
Then we have the following character formula 
	\[
\ch \ M(\cH, \lambda) = e^\lambda \prod_{\alpha\in \D(\cH^-)_0} (1-e^{-\alpha})^{-1} \prod_{\alpha\in \D(\cH^-)_1}(1+e^{-\alpha}).
	\]

Notice that the subalgebra $\cS$ lies in the center of $\bU(\cH)$ and acts freely on $M(\cH, \lambda)$. Then any ideal $J$ of $\cS$ defines the $\cH$-submodule $JM(\cH, \lambda)$ of $M(\cH, \lambda)$. On the other direction, for any $\cH$-submodule $N\subseteq M(\cH, \lambda)$ we define an ideal $J_N$ of $\cS$ by requiring the equality:
	\[
N\cap \cS v_\lambda = J_N v_\lambda.
	\]
In other words, $J_N = \{a\in \cS\mid av_\lambda \in N\}$.

Let $D_\delta^X$ be the matrix determined by the pairing 
	\[
(\fh^X\otimes \C t)\times (\fh^X\otimes \C t^{-1})\to \hh,\quad (x,y) \mapsto [x,y],
	\]
and consider $\det D_\delta^X$ as an element of the symmetric algebra $S(\hh)$.

\begin{example}
If $n=3$ and $X = \{\varepsilon_1-\varepsilon_2\}$, then $\fh^X = \C h^1$, where $h^1 = H_1 + H_2 -2 H_3$. In particular, $\det D_\delta^X = 2(H_1 + H_2 - 2 H_3)^2$. If $X = \emptyset$ and $n\geq 3$, then $\fh^X = \fh$, and $\det D_\delta^X = 2^{n-1}H_1\cdots H_n(\frac{1}{H_1}+\cdots+\frac{1}{H_n})$ (see \cite{GS08}).
\end{example}

\begin{prop}\label{prop:ideals.S.bij.subm.M(H)} 
The $\cH$-module $M(\cH, \lambda)$ is reducible. If $\det D_\delta^X(\lambda)\neq 0$, then there is a bijection between submodules of $M(\cH, \lambda)$ and ideals of $\cS$. In particular,  $L(\cH, \lambda)\cong \Lambda(\cH_{1}^-)$ as vector spaces.
\end{prop}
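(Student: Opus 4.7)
The plan is to leverage the fact that $\cS = \bU(\cH_0^-)$ is central in $\bU(\cH)$ and, by PBW, acts freely on $M(\cH,\lambda)$. Centrality follows from the Section~1 brackets: for $x,y \in \fh^X$ (abelian), $[x(k),y(m)]_0 = 0$, while in the odd-exponent case $[x(k),y(m)]_1 = \iota(2xy)$ lies in $\cH_0$; so $\cH_0$ is central in $\cH$. The super-PBW theorem then gives
\[
M(\cH,\lambda) \;\cong\; \bU(\cH^-)\,v_\lambda \;\cong\; \cS \otimes_{\C} \Lambda(\cH_1^-),
\]
with $\cS$ acting by left multiplication on the first tensor factor. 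In particular $M(\cH,\lambda)$ is a free $\cS$-module, so $\cS^+\cdot M(\cH,\lambda)$ is a proper nonzero submodule, which proves reducibility.

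For the bijection, set $\Phi(J) = J\cdot M(\cH,\lambda)$ for an ideal $J\subseteq \cS$ (this is an $\cH$-submodule because $J$ is central) and $\Psi(N) = J_N := \{a \in \cS : a v_\lambda \in N\}$ for a submodule $N$. Picking a homogeneous basis $\{u_I\}$ of $\Lambda(\cH_1^-)$ with $u_\emptyset = 1$, the freeness of $M(\cH,\lambda)$ over $\cS$ gives $J\cdot M(\cH,\lambda)\cap \cS v_\lambda = J v_\lambda$, so $\Psi\circ\Phi = \id$; the inclusion $J_N\cdot M(\cH,\lambda)\subseteq N$ is automatic from centrality. The remaining content of the bijection is the reverse inclusion $N\subseteq J_N\cdot M(\cH,\lambda)$, and this is where the hypothesis $\det D_\delta^X(\lambda)\neq 0$ enters.

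For this last step, expand any $v \in N$ uniquely as $v = \sum_I a_I u_I v_\lambda$ with $a_I \in \cS$; the goal is $a_I \in J_N$ for every $I$. The hypothesis is equivalent to non-degeneracy of the bilinear form $(x,y)\mapsto 2\lambda(xy)$ on $\fh^X$, and combined with $[x(2m+1),y(-2m-1)] = 2xy$ this yields a non-degenerate scalar pairing between the $(2m+1)\delta$- and $-(2m+1)\delta$-weight subspaces of $\cH_1$ for every $m \geq 0$. Using this, for each basis vector $x_i(-2m-1) \in \cH_1^-$ one constructs a dual $y_i^m \in \fh^X \otimes t^{2m+1}$ whose action on $M(\cH,\lambda)$, reduced modulo $\cS^+\cdot M(\cH,\lambda)$, is an exterior contraction operator on the Fock space $\Lambda(\cH_1^-)$; cross-level brackets $[x(2m+1), y(-2m'-1)]$ with $m\neq m'$ lie in $\cH_0^{\pm}$ and thus either annihilate $v_\lambda$ or contribute only terms in $\cS^+\cdot v_\lambda$. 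By induction on the exterior length of $v$, working modulo $\cS^+\cdot M(\cH,\lambda)$, successive applications of such duals peel off the components $a_I v_\lambda$ one at a time inside $N$, forcing $a_I \in J_N$. The main obstacle is controlling the cascade of $\cS^+$-valued corrections from cross-level brackets so that the induction closes cleanly. Once the bijection is established, the unique maximal proper $\cH$-submodule of $M(\cH,\lambda)$ corresponds to the unique maximal graded ideal $\cS^+$, yielding $L(\cH,\lambda) \cong M(\cH,\lambda)/\cS^+\cdot M(\cH,\lambda) \cong \Lambda(\cH_1^-)$ as vector spaces.
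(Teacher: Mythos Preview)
Your setup is correct and matches the paper: reducibility via $\cS^+M$, the maps $\Phi(J)=JM$ and $\Psi(N)=J_N$, the identity $\Psi\Phi=\id$ from freeness, and the easy inclusion $J_NM\subseteq N$. The gap is precisely where you flag it: the reverse inclusion $N\subseteq J_NM$, and your sentence ``The main obstacle is controlling the cascade of $\cS^+$-valued corrections \dots'' is an admission that the argument is not closed. Working modulo $\cS^+M$ cannot yield $a_I\in J_N$; at best it gives $a_I\bmod \cS^+$, which is not what you need. Applying a dual $y_i^m$ to $v\in N$ produces an element of $N$ of the form $a_{I_0}u_{I_0'}v_\lambda$ plus $\cS^+$-multiples of other monomials, and there is no mechanism in your sketch to strip those extra terms off while staying inside $N$.

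The paper avoids the cascade entirely with two moves you are missing. First, it passes to the quotient $V=M/J_NM$, a free module over $R=\cS/J_N$, and reduces the problem to showing $W:=\pi(N)=0$, using that $W\cap Rv_\lambda=0$. Second, and this is the key point, it orders the basis of $\cH_1^-$ so that ``larger'' means farther from degree zero. If $X_{i,m}$ is the \emph{maximal} creation operator occurring in some $0\neq v\in W$ and $Y_{i,m}\in\cH_1^+$ is its dual, then every cross-level bracket $[Y_{i,m},X_{k,m'}]$ with $X_{k,m'}<X_{i,m}$ lands in $\cH_0^+$ (positive even degree), not in $\cH_0^-$. Since $\cH_0^+$ is central and kills $v_\lambda$, it kills all of $M$; the same-level brackets with $k\neq i$ vanish by the orthonormal choice of basis of $\fh^X$ afforded by $\det D_\delta^X(\lambda)\neq 0$. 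Hence $Y_{i,m}v$ equals \emph{exactly} the coefficient $w$ in $v=X_{i,m}w+u$, with no corrections at all, and $0\neq w\in W$ has strictly smaller maximal factor, contradicting minimality. The ordering choice is what makes your ``$\cS^+$-cascade'' disappear; without it the induction does not close.
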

\begin{proof}
The fact that $\cH_0^-$ is in the center of $\cH$ implies that any ideal $J$ of $\cS$ defines a submodule of $M(\cH, \lambda)$, namely, $JM(\cH, \lambda)$. Thus the first statement follows.

Now, if we assume $\det D_\delta^X(\lambda)\neq 0$, then we can use similar arguments to those of \cite[Proposition~3]{GS08} to prove that there is a bijection between ideals of $\cS$ and submodules of $M(\cH, \lambda)$. Namely, let $M = M(\cH, \lambda)$, and let $N$ be a submodule of $M$. We claim that $N = J_NM$. Indeed, let $jm\in J_NM$. Then, writing $m = u v_\lambda$ with $u\in \bU(\cH^-)$, we get that
	\[
jm=juv_\lambda=ujv_\lambda\in uN\subseteq N.	
	\]
Thus $J_NM\subseteq N$. In order to prove the other inclusion, we consider the canonical projection $\pi: M\to V:=M/J_NM$, $W = \pi(N)$, and $R = \cS/J_N$.  Notice that $V$ is free as an $R$-module, and that $W\cap Rv_\lambda=\pi(N\cap \cS v_\lambda)=\pi(J_N v_\lambda)=0$. Now we suppose that $W\neq 0$ to get a contradiction. 

Let $h^1,\ldots, h^t$ be any fixed basis of $\fh^X$, and set $X_{i,m} := h^i(m)$, and $Y_{i,m} := h^i(-m)$. Recall from the commutation relations of $\cH$ that $[X_{i,m}, Y_{k,m}] = [X_{i,0}, Y_{k,0}]$, and since we are assuming that $\det D_\delta^X(\lambda)\neq 0$, we may consider that the basis elements $h^1,\ldots, h^t$ were chosen so that $\lambda([X_{i,j}, Y_{k,j}]) = \delta_{i,k}$. Notice that the elements $X_{i,j}$ for $i=1,\ldots, r$ and $m\geq 0$ form a basis for $\cH_{1}^-$. In particular, if we let $X_{i,m}\geq X_{k,n}$ if $m\geq n$ or $m=n$ and $i\geq k$, then the monomials $X_{i_1,m_1}\cdots X_{i_s, m_s}$ with $X_{i_1,m_1}>\cdots > X_{i_s,m_s}$ form a basis $B$ of $V$ over $R$.

Since we are assuming $W\neq 0$, and since $W\cap Rv_\lambda=0$, we can choose a nonzero $v\in W$ such that the maximal $X_{i,m}$ that occurs in the expression of $v$ as a linear combination of elements  of $B$ is minimal among all nonzero vectors of $W$. Now we write $v = X_{i,m}w + u$ for nonzero $w$ and $u$ such that all factors occurring in $w$ and $u$ are less than $X_{i,m}$. Thus 
	\[
Y_{i,m}u = Y_{i,m}w = 0\text{ and } Y_{i,m}v = w,
	\]
as $[X_{i,m}, Y_{i,m}]$ is in the center of $\cH$ and it acts as $\lambda([X_{i,m}, Y_{k,m}]) = 1$ on $v_\lambda$. But this implies $0\neq w\in W$, and all factors occurring in $w$ are less than $X_{i,m}$, which is a contradiction.
\end{proof}

\begin{cor}
Suppose that $\det D_\delta^X(\lambda)\neq 0$.  Then we have the character formula 
	\[
\ch \ L(\cH, \lambda) = \ch \ \Lambda(\cH_{1}^-) = e^\lambda \prod_{\alpha\in \D(\cH^-)_1}(1+e^{-\alpha}).
	\]
\end{cor}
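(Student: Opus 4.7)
The plan is to read off the character formula from the vector-space identification $L(\cH,\lambda)\cong \Lambda(\cH_1^-)$ already obtained in Proposition \ref{prop:ideals.S.bij.subm.M(H)}, after upgrading it to an $\hh$-graded isomorphism.

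First I would pin down the unique maximal proper submodule of $M(\cH,\lambda)$. By the proposition, weight submodules of $M(\cH,\lambda)$ correspond to $\hh$-graded (equivalently $D$-graded) ideals of $\cS=\bU(\cH_0^-)$, and since $\cH_0^-=\fh^X\otimes t^{-2}\C[t^{-2}]$ is concentrated in strictly negative $D$-degrees, the algebra $\cS$ is nonpositively graded with one-dimensional degree-zero part $\C$. Consequently its unique maximal graded proper ideal is the augmentation ideal $\cS^+$, the maximal proper submodule of $M(\cH,\lambda)$ is $\cS^+ M(\cH,\lambda)$, and
\[
L(\cH,\lambda)\;=\;M(\cH,\lambda)\,/\,\cS^+ M(\cH,\lambda).
\]

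Next I would invoke the PBW theorem for the Lie superalgebra $\cH^-$. This gives an $\hh$-graded vector-space isomorphism
\[
M(\cH,\lambda)\;\cong\;\cS\otimes \Lambda(\cH_1^-)\otimes \C v_\lambda,
\]
indeed this is exactly the basis exhibited (with the variables $X_{i,m}$) in the proof of Proposition \ref{prop:ideals.S.bij.subm.M(H)}. Passing to the quotient by $\cS^+ M(\cH,\lambda)$ kills the $\cS^+$-factor and yields
\[
L(\cH,\lambda)\;\cong\;\Lambda(\cH_1^-)\otimes \C v_\lambda
\]
as $\hh$-graded vector spaces. Since $\cH_1^-$ is a purely odd $\hh$-graded vector space with weight-space decomposition indexed (with multiplicities) by $\D(\cH^-)_1$, the standard character of an exterior algebra on such a space gives
\[
\ch\Lambda(\cH_1^-)\;=\;\prod_{\alpha\in \D(\cH^-)_1}(1+e^{-\alpha}),
\]
and multiplying by $e^\lambda$ yields the claimed formula.

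The only real thing to verify is that the submodule/ideal correspondence of the previous proposition respects the $\hh$-grading, i.e.\ that the maximal proper submodule is itself a weight submodule and hence is the one attached to $\cS^+$. This is routine: $M(\cH,\lambda)$ is a weight module, any submodule is a sum of weight components, and summing all such proper submodules still yields a proper submodule since their corresponding graded ideals sum to a graded proper ideal of $\cS$, bounded above by $\cS^+$. After this check the proof is almost bookkeeping, and I anticipate no serious obstacle.
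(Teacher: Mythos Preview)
Your proposal is correct and follows essentially the same route as the paper: both deduce the character formula directly from the identification $L(\cH,\lambda)\cong \Lambda(\cH_1^-)$ coming from Proposition~\ref{prop:ideals.S.bij.subm.M(H)}. You simply make explicit what the paper leaves implicit, namely that the PBW identification $M(\cH,\lambda)\cong \cS\otimes\Lambda(\cH_1^-)\otimes\C v_\lambda$ and the quotient by $\cS^+M(\cH,\lambda)$ are $\hh$-graded, which is exactly what is needed to pass from a vector-space isomorphism to a character equality.
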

\begin{proof}
This follows from the isomorphism of vector spaces $L(\cH, \lambda)\cong \Lambda(\cH_{1}^-)$.
\end{proof}

\subsection{Modules for Heisenberg Lie superalgebra}
In this section we consider the special case where $X=\emptyset$, and, in particular, $\fh^X = \fh$ and $\cH=\cH(X)=L(\fh)\oplus \C K$. 

Define  
	\[
\cH_0':= \bigoplus_{r\in \Z}\fh\otimes t^{2r}.
	\]
It is clear that $\cH_0'$ is an ideal of $\cH$, and $K\notin \cH_0'$. Define
	\[
\tcH:=\cH/\cH_0'.
	\]

\begin{lem}\label{lem:good.basis}
Let $\pi:\cH\to \tcH$ be the canonical projection. Then there exists a basis $\{h^1,\ldots, h^{n-1}\}$ of $\fh$ such that $\pi(h^ih^j) = \delta_{ij}K$.
\end{lem}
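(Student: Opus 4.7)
The plan is to unpack the notation $\pi(h^ih^j)=\delta_{ij}K$ and then reduce the lemma to a standard fact in linear algebra. Recall from Section~1 that, upon identifying $K$ with $(1/n)I_n$, the subspace $\fh\oplus \C K$ (embedded in $\cH$ via $h\mapsto h\otimes 1$) is identified with the space of diagonal matrices of $\fgl(n)$. For any diagonal $D\in \fgl(n)$ we therefore have $D=\iota(D)+\tr(D)\,K$, where $\iota(D)\in \fh$ is the traceless part. Applying this to the matrix product $h^ih^j$, which is again diagonal since $h^i,h^j$ are, gives the identity
	\[
h^ih^j \;=\; \iota(h^ih^j)(0) \;+\; \tr(h^ih^j)\,K
	\]
inside $\cH$. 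Since $\iota(h^ih^j)(0)\in \fh\otimes t^0\subseteq \cH_0'=\Ker \pi$, it follows that $\pi(h^ih^j)=\tr(h^ih^j)\,K$, and so the lemma is equivalent to the existence of a basis $\{h^1,\ldots,h^{n-1}\}$ of $\fh$ satisfying $\tr(h^ih^j)=\delta_{ij}$.

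I would then invoke a standard linear-algebra argument. The bilinear form $B(h,h'):=\tr(hh')$ is symmetric on $\fh$, and non-degenerate there: if $h=\mathrm{diag}(a_1,\ldots,a_n)\in \fh$ satisfies $B(h,h')=0$ for all $h'\in \fh$, then testing against $h'=E_{ii}-E_{jj}\in \fh$ forces $a_i=a_j$ for all $i\neq j$, whence $h=0$ by the trace-zero condition. Since $\C$ contains all square roots, every non-degenerate symmetric bilinear form over $\C$ admits an orthonormal basis via Gram--Schmidt. Applied to $B$ on the $(n-1)$-dimensional space $\fh$, this yields the required basis $\{h^1,\ldots,h^{n-1}\}$.

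The \emph{main obstacle} here is purely notational: once one recognises that $\pi$ kills the traceless part of $h^ih^j\in \fgl(n)$ and retains only the $K$-component supplied by the identification $K=(1/n)I_n$, the statement reduces to the elementary fact that the trace form on $\fh$ is diagonalisable to the identity over $\C$, and no further input from the structure of $\hg$ is required.
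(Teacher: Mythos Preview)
Your proof is correct. Both you and the paper reduce the lemma to finding a basis of $\fh$ that is orthonormal with respect to the trace form $B(h,h')=\tr(hh')$; the only difference is that the paper exhibits an explicit orthogonal basis $\{H_1+\cdots+H_i-iH_{i+1}\mid 1\le i\le n-1\}$ (for which one checks directly that $\tr(G^iG^j)=0$ for $i<j$ and $\tr((G^i)^2)=i(i+1)\neq 0$) and then normalises, whereas you invoke the abstract Gram--Schmidt argument over $\C$. Your unpacking of the notation $\pi(h^ih^j)=\tr(h^ih^j)\,K$ via the decomposition $h^ih^j=\iota(h^ih^j)+\tr(h^ih^j)K$ and the observation $\iota(h^ih^j)(0)\in\cH_0'=\Ker\pi$ is in fact more explicit than what the paper writes, and clarifies why the condition ``$H^iH^j\in\fh$'' in the paper's proof is exactly the orthogonality condition. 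The explicit basis has the minor advantage of being concrete (and visibly consisting of real matrices before normalisation), while your argument is shorter and emphasises that no special feature of $\fh$ beyond non-degeneracy of $B$ is needed.
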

\begin{proof}
The set $\{H_1+\cdots +H_{i} - iH_{i+1}\mid 1\leq i\leq n-1\}$ is a basis of $\fh$ such that $H^iH^j\in \fh$ if and only if $i\neq j$. Then a suitable normalization of this basis gives the required one.
\end{proof}

Now we have the following result:

\begin{prop}\label{prop-Heis}	
 $\tcH$ is an infinite dimensional Heisenberg Lie superalgebra such that 
 	\[
\tcH \cong \C K \oplus  \bigoplus_{r\in \Z}\fh\otimes t^{2r+1}
 	\]
as vector spaces, where  $[h\otimes t^{2r+1}, h'\otimes t^{-2r-1}]$ is a multiple of $K$ and $[h\otimes t^{2r+1}, h'\otimes t^{2s+1}]=0$ for all $h, h'\in \fh$ and  all integer $r, s$ with $r+s+1\neq 0$. 
\end{prop}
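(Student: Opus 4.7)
The plan is to verify three things in sequence: that $\cH_0'$ is a (central) ideal so the quotient makes sense, that brackets of odd elements in $\tcH$ are as claimed, and that the resulting structure fits the definition of a Heisenberg Lie superalgebra.

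First I would check that $\cH_0'$ is even central in $\cH$. Pick any $h(2r) \in \cH_0'$ and any $x(m)\in \cH$. Since $\fh$ is commutative as a subspace of matrices we have $[h,x]_0 = hx-xh = 0$, so whenever the product $(2r)m$ is even — which is always — the first bracket formula from the preliminaries gives $[h(2r), x(m)] = [h,x]_0(2r+m) = 0$. Also $[h(2r), K]=0$. Thus $\cH_0'$ lies in the center of $\cH$, so it is certainly an ideal, the quotient $\tcH$ inherits a Lie superalgebra structure, and the canonical splitting
\[
\tcH = \C K \oplus \bigoplus_{r\in \Z}\fh\otimes t^{2r+1}
\]
holds as vector spaces (the odd summands of $\cH$ project isomorphically since they meet $\cH_0'$ trivially).

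Next I would compute the brackets in $\tcH$. For $h, h'\in \fh$ and $r,s\in\Z$, the product $(2r+1)(2s+1)$ is odd, so
\[
[h(2r+1), h'(2s+1)] = [h,h']_1(2r+2s+2) + 2\delta_{r+s+1,0}\tr(hh')K.
\]
Because $\fh$ is commutative, $[h,h']_1 = \iota(2hh') \in \fh$ (it is a traceless diagonal matrix). When $r+s+1 \neq 0$, the first summand lies in $\fh\otimes t^{2(r+s+1)} \subseteq \cH_0'$ and thus vanishes in $\tcH$; when $r+s+1 = 0$, the first summand lies in $\fh\otimes 1 \subseteq \cH_0'$ and again vanishes mod $\cH_0'$, leaving only $2\tr(hh')K$. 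This is exactly the required bracket relations: central when $r+s+1\neq 0$ and proportional to $K$ when $s = -r-1$.

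Finally I would conclude that $\tcH$ is an infinite dimensional Heisenberg Lie superalgebra. Since $\C K$ is central and $[\tcH,\tcH]\subseteq \C K$ by the previous paragraph, it remains to exhibit a non-degenerate dual pairing on the odd part modulo the center. Using the basis $\{h^1,\ldots,h^{n-1}\}$ of $\fh$ from Lemma \ref{lem:good.basis}, which (after the normalization stated there) satisfies $\pi(h^i h^j) = \delta_{ij}K$, the formula from the previous paragraph shows
\[
[h^i(2r+1), h^j(-2r-1)] = \delta_{ij}K \quad \text{in } \tcH,
\]
so $\{h^i(2r+1), h^j(-2r-1)\}_{i,j,r}$ is a symplectic (in the super sense) basis witnessing the Heisenberg structure. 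The main obstacle is simply bookkeeping: verifying that $[h,h']_1$ always lands in $\fh \otimes t^{\mathrm{even}}$ and hence is killed by $\pi$ — once that is noted, everything else is immediate from the bracket formulas of the preliminaries and Lemma \ref{lem:good.basis}.
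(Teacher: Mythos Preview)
Your proof is correct and follows exactly the route the paper has in mind: the paper does not actually supply a proof of this proposition, treating it as an immediate consequence of the bracket relations displayed in the preliminaries together with Lemma~\ref{lem:good.basis}, and your argument simply spells out those details. One small quibble: with the normalization $\pi(h^ih^j)=\delta_{ij}K$ the bracket you compute is $[h^i(2r+1),h^j(-2r-1)]=2\delta_{ij}K$ rather than $\delta_{ij}K$, but this factor of $2$ is immaterial for non-degeneracy and is absorbed by the ``suitable rescaling'' the paper invokes immediately afterward.
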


Fix a basis $h^1, \ldots, h^{n-1}$ of $\fh$ as in Lemma~\ref{lem:good.basis}, and let $\varphi : \N\to \{\pm\}^{n-1}$ be a map of sets. Then $\varphi$ induces a triangular decomposition on $\tcH$:
	\[
\tcH = \tcH_\varphi^-\oplus \C K\oplus \tcH_\varphi^+,
	\]
where 
	\[
\tcH_\varphi^\pm = \left( \bigoplus_{n\in \N,\ 1\leq i\leq t, \ \varphi(n)_i=\pm} \C h^i\otimes t^{2n+1}\right) \oplus \left( \bigoplus_{m\in \N,\ 1\leq i\leq t, \  \varphi(m)_i=\mp} \C h^i\otimes t^{-(2m+1)}\right),
	\]
and $\varphi(n)=(\varphi(n)_1, \ldots, \varphi(n)_{n-1})$.  The Verma module  associated to such a decomposition is called the \emph{$\varphi$-Verma module} and it is denoted by $M_\varphi(\tcH, a)$, where $a\in \C$ is the value of $K$ on $M_\varphi(\tcH, a)$. The module $M_\varphi(\tcH, a)$ is isomorphic (as a vector space) to $\bU( \tcH_\varphi^-)$ which is nothing but the Grassmann algebra $\Lambda(\tcH_\varphi^-)$. Finally let $L_\varphi(\tcH, a)$ denote the unique irreducible quotient of $M_\varphi(\tcH, a)$. 

\begin{rem}
Notice that every $\tcH$-module can (and will) be regarded as an $\cH$-module via the canonical projection $\cH \twoheadrightarrow  \tcH$.
\end{rem}

\begin{cor}
If $\lambda(\fh)=0$, then the action of $\cH$ on $L(\cH, \lambda)$ factors through the epimorphism
	\[
\cH\twoheadrightarrow \tcH.
	\]
In particular, if $\lambda(K):=a\neq 0$, then $L(\cH, \lambda)\cong M_\varphi(\tcH, a)$ as $\cH$-modules, where $\varphi(i) = (+,\ldots, +)$ for all $i\in \N$ (i.e. $M_\varphi(\tcH, a)$ is nothing but the standard Verma module of $\tcH$).
\end{cor}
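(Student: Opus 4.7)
The plan is to exploit Proposition~\ref{prop:ideals.S.bij.subm.M(H)} to pin $L(\cH,\lambda)$ down explicitly. First I would verify that $\det D_\delta^X(\lambda) \neq 0$ whenever $a \neq 0$: identifying $\fh \oplus \C K$ with the diagonal Cartan of $\fgl(n)$ via $K \leftrightarrow (1/n)I_n$, the hypothesis $\lambda(\fh)=0$ together with the relation $H_1+\cdots+H_n = nK$ forces $\lambda(H_i) = a$ for every $i$. The formula from the Example preceding Proposition~\ref{prop:ideals.S.bij.subm.M(H)} then evaluates to $\det D_\delta^X(\lambda) = 2^{n-1} n\, a^{n-1} \neq 0$, so Proposition~\ref{prop:ideals.S.bij.subm.M(H)} applies and identifies $L(\cH,\lambda)$ with the quotient $M(\cH,\lambda)/\cS^+M(\cH,\lambda)$, whose underlying space is $\Lambda(\cH_1^-)\bar v_\lambda$.

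Next I would argue that $\cH_0'$ annihilates $L(\cH,\lambda)$. The bracket formulas in $\cH$ show that every bracket involving at least one element with an even power of $t$ vanishes (only the odd--odd case contributes nontrivially), so $\cH_0$ lies in the centre of $\cH$. Hence each $z \in \cH_0'$ commutes with $\cH$, and its action on $L(\cH,\lambda)$ is determined by its action on $\bar v_\lambda$. I would check this piece by piece: $\fh\otimes t^{2r}$ with $r>0$ lies in $\cH^+$, so already annihilates $v_\lambda$ in $M(\cH,\lambda)$; $\fh\otimes 1$ acts by $\lambda|_{\fh}$, which vanishes by assumption; and $\fh\otimes t^{2r}$ with $r<0$ lies in $\cS^+$, which is quotiented out in passing to $L(\cH,\lambda)$. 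This yields the factorization $\cH \twoheadrightarrow \tcH$ and proves the first assertion.

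For the ``in particular'' clause I would regard $L(\cH,\lambda)$ as a $\tcH$-module via this factorization. The image of $\cH^+$ in $\tcH$ equals $\tcH_\varphi^+$ for $\varphi(i)=(+,\ldots,+)$, and $\bar v_\lambda$ is annihilated by this image while $K\bar v_\lambda = a\bar v_\lambda$. The universal property of the $\varphi$-Verma module therefore produces a surjective $\tcH$-homomorphism $M_\varphi(\tcH,a) \twoheadrightarrow L(\cH,\lambda)$. Both modules are isomorphic to $\Lambda(\cH_1^-)$ as graded vector spaces (the left-hand side by its definition as $\bU(\tcH_\varphi^-)\bar v_a$, the right-hand side by the first step), so this surjection is forced to be an isomorphism.

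The main obstacle I anticipate is the intermediate claim that $\cH_0^-$ really acts trivially on $L(\cH,\lambda)$ rather than by some nonzero central character: central operators on infinite-dimensional modules can in principle shift weights without vanishing. The input that circumvents this is Proposition~\ref{prop:ideals.S.bij.subm.M(H)}, which in our regime pins the irreducible quotient to the one associated with the augmentation ideal of $\cS$, thereby making the vanishing of $\cH_0^-$ transparent.
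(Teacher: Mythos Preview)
Your argument is essentially the same as the paper's, with one caveat. The first assertion (factorization through $\tcH$) is claimed for \emph{all} $\lambda$ with $\lambda(\fh)=0$, including the case $a=0$; but your justification that $\cH_0^-$ kills $L(\cH,\lambda)$ invokes the bijection of Proposition~\ref{prop:ideals.S.bij.subm.M(H)}, which requires $\det D_\delta^X(\lambda)\neq 0$, and by your own computation this fails precisely when $a=0$. The fix is simple and is what the paper actually uses: one only needs that $\cS^+M(\cH,\lambda)$ is a \emph{proper} submodule (immediate from freeness of $M(\cH,\lambda)$ over $\cS$), hence lies in the maximal submodule, so $\cS^+$ annihilates $L(\cH,\lambda)$ unconditionally. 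No bijection is needed for this step.

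For the ``in particular'' clause your dimension-counting argument (comparing graded pieces of $\Lambda(\cH_1^-)$) is a valid alternative to the paper's route, which instead cites \cite{BBFK13} for the irreducibility of $M_\varphi(\tcH,a)$ when $a\neq 0$ and concludes from that. Your version is more self-contained; the paper's version additionally records the irreducibility of $M_\varphi(\tcH,a)$ as a byproduct.
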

\begin{proof}
We have $(\fh\otimes t^{2}\C[t]) L(\cH, \lambda)=0$, since $\fh\otimes t^{2}\C[t]$ is in the center of $\cH$ and it acts trivially on $v_\lambda$. Next, $\fh\otimes t^{-2}\C[t^{-1}]$ is contained in the maximal ideal $\cS^+$ of $\cS$, and then, by Proposition~\ref{prop:ideals.S.bij.subm.M(H)}, we must have $(\fh\otimes t^{-2}\C[t^{-1}]) L(\cH, \lambda)=0$. Finally, since $\lambda(\fh)=0$, we conclude that $\cH_0' L(\cH, \lambda)=0$ and the first statement follows.

Using similar arguments as those of \cite[Proposition~3.3]{BBFK13} one easily shows that $M_\varphi (\tcH, a)$ is an irreducible $\tcH$-module if and only if $a\neq 0$. Thus the result follows.
\end{proof}

Let $N$ be an irreducible $\cH$-module such that $\fh N=0$.  We are interested in the case when $N$ is $\Z$-graded. Then we can define the action of $D$ on $N$ by $D |_{N_i} = i\Id$. Notice that under such conditions $\cH_0'$ must act trivially on $N$ (indeed, $N$ is irreducible and $\Z$-graded, $\cH_0'$ is central in $\cH$, $\fh N=0$ and any element of $\fh\otimes t^{2r}$ with $r\in \Z^\times$ have degree different from $0$).

Set $x_k^j=h^j\otimes t^{k}$, $k\in \Z$, $j=1, \ldots, n-1$, so that 
	\[
\tcH  \cong \C K \oplus  \bigoplus_{r\in \Z,\  j=1, \ldots, t}\C x_{2r+1}^j
	\] 
and $[x_{2r+1}^j, x_{2s-1}^i]=\delta_{ij}\delta_{r, -s}K$, after suitable rescaling (see Lemma~\ref{lem:good.basis} and Proposition~\ref{prop-Heis}). Also set 
	\[
d_{2r+1}^j := x_{-2r-1}^j x_{2r+1}^j,\quad r\in \Z_{\geq 0},\  j=1, \ldots, n-1.  
	\]

Since $K$ is central and $N$ is irreducible, we have that $K$ acts on $N=\sum_{i\in \Z}N_i$ via multiplication by some $a\in \C$. Assume that $a\neq 0$, and fix a nonzero $v\in N_i$ for some $i$. Then 
	\[
(d_{2r+1}^j)^2v = (x_{-2r-1}^j x_{2r+1}^j)(x_{-2r-1}^j x_{2r+1}^j) = x_{-2r-1}^j(K - x_{-2r-1}^j x_{2r+1}^j)x_{2r+1}^j =  a d_{2r+1}^j v,
	\]
that is, $d_{2r+1}^j$ is diagonalizable on $N_i$ and has eigenvalues $a$ or $0$. Now we have:

\begin{lem}\label{lem-eigen}
If  $d_{2r+1}^j v=av$, then $x_{-2r-1}^j v=0$. On the other hand, if $d_{2r+1}^j v=0$, then $x_{2r+1}^j v=0$.
\end{lem}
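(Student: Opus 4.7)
The plan is to reduce both implications to two algebraic identities in $\bU(\tcH)$ together with the fact that $K$ acts on $N$ as the nonzero scalar $a$. The two identities are the nilpotency $(x^j_{\pm(2r+1)})^2 = 0$ of the odd generators, and the relation
\[
x^j_{2r+1}\, x^j_{-2r-1} = K - d^j_{2r+1}
\]
in $\bU(\tcH)$.

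First I would establish these identities. For the nilpotency: since $x^j_{2r+1}$ is odd, the super-bracket $[x^j_{2r+1}, x^j_{2r+1}]$ equals $2(x^j_{2r+1})^2$, but this bracket vanishes by Proposition~\ref{prop-Heis} because the $t$-degrees $2r+1$ and $2r+1$ do not sum to zero; the same argument applies to $x^j_{-2r-1}$. The second identity is the super-bracket $\{x^j_{2r+1}, x^j_{-2r-1}\} = K$ rewritten using the definition of $d^j_{2r+1} = x^j_{-2r-1} x^j_{2r+1}$.

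For implication (i), assuming $d^j_{2r+1} v = a v$, I would act on the left by $x^j_{-2r-1}$ to get
\[
a\, x^j_{-2r-1} v = x^j_{-2r-1} d^j_{2r+1} v = (x^j_{-2r-1})^2\, x^j_{2r+1} v = 0,
\]
and conclude $x^j_{-2r-1} v = 0$ using $a\neq 0$. For implication (ii), assuming $d^j_{2r+1} v = 0$, I would act by $x^j_{2r+1}$:
\[
0 = x^j_{2r+1}\, d^j_{2r+1} v = (K - d^j_{2r+1})\, x^j_{2r+1} v = a\, x^j_{2r+1} v - x^j_{-2r-1} (x^j_{2r+1})^2 v = a\, x^j_{2r+1} v,
\]
so $x^j_{2r+1} v = 0$. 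The only real subtlety is keeping the super-signs straight: the bracket of the two odd elements $x^j_{2r+1}$ and $x^j_{-2r-1}$ is an anti-commutator, which is exactly what produces the sign $K - d^j_{2r+1}$ (rather than $K + d^j_{2r+1}$) in the key identity; without this sign the second implication would not go through.
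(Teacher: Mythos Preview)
Your proof is correct and follows essentially the same approach as the paper's, just with more detail spelled out. The paper's argument is very terse: for (i) it simply notes that $x^j_{-2r-1}d^j_{2r+1}=0$ as an operator identity (which is your $(x^j_{-2r-1})^2=0$), and for (ii) it rewrites the hypothesis as $x^j_{2r+1}x^j_{-2r-1}v=av$ via the anticommutator and leaves the final step (multiplying by $x^j_{2r+1}$ and using $(x^j_{2r+1})^2=0$) to the reader; your version makes all of these steps explicit.
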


\begin{proof}
The fact that $x^j_{-2r-1}d^j_{2r+1}=0$ implies the first statement. For the second statement observe that $d_{2r+1}^j v=0$ implies $x_{2r+1}^jx_{-2r-1}^j v = av$. Hence the result follows.
\end{proof}

A non-zero $\Z$-graded $\cH$-module $N$ is \emph{diagonal} if all $d_{2r+1}^j$ are  simultaneously diagonalizable  for $r\in \Z_{\geq 0}$, $j=1, \ldots, n-1$. Let $N_i$ be a graded component of a diagonal $\Z$-graded $\cH$-module $N$. We associate to  $N_i$ a $t$-tuple $(\mu^1, \ldots, \mu^{n-1})$ of infinite sequences  $\mu^j=(\mu_{2r+1}^j)$ consisting of the eigenvalues $\mu_{2r+1}^j$ of $d_{2r+1}^j$, $r\in \Z_{\geq 0}$, $j=1, \ldots, n-1$. In what follows we classify all diagonal irreducible modules with trivial action of $\fh$, and we describe their structure. 

\begin{theo}\label{thm-Heis}
Let $N$ be an irreducible diagonal $\Z$-graded $\cH$-module, such that $\fh N=0$ and $Kv=av$ for some $a\in \C$ and all $v\in N$. Then the following hold:
\begin{enumerate}
\item $\cH_0'$ acts trivially on $N$, which is irreducible $\tcH$-module;
\item If $v\in N$ is a nonzero homogeneous element, then $v$ is $\varphi_{\mu} $-highest vector, where $\varphi_{\mu} $ is determined by the eigenvalues of $d_{2r+1}^j $ on $v$, and $N\simeq L_{\varphi_{\mu}}(\tcH, a)$ up to a shift of gradation. In particular, if  $a\neq 0$, then $N\simeq M_{\varphi_{\mu}}(\tcH, a)$ up to a shift of gradation;
\item If $a=0$, then $N$ is the trivial $1$-dimensional module. 
\item If $a\neq 0$, then $M_{\varphi_{\mu}}(\tcH, a)$ has finite dimensional graded components if and only if $\varphi_{\mu}$ differs  from $\varphi_{\nu}$ only in finitely many places, where $\nu_{2k+1}^j=0$ for all $k\in \Z_{\geq 0}$, $j=1, \ldots, n-1$, or $\nu_{2k+1}^j\neq 0$ for all $k\in \Z_{\geq 0}$, $j=1, \ldots, n-1$.
\end{enumerate}
\end{theo}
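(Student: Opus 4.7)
\textbf{Parts (a) and (b).} The plan is to first verify that $\cH_0'=\bigoplus_{r\in\Z}\fh\otimes t^{2r}$ is central in $\cH$ using the bracket formulas---all the relevant brackets reduce to $[h,h']_0=0$ for $h,h'\in\fh$---and then to invoke Dixmier's form of Schur's lemma to conclude that each element of $\cH_0'$ acts by a scalar on the (at most countable-dimensional) cyclic irreducible $N$. The $\Z$-grading forces the scalar attached to $h\otimes t^{2r}$ with $r\neq 0$ to vanish, since a nonzero scalar would map $N_i$ into the disjoint piece $N_{i+2r}$, while the $r=0$ part $\fh$ acts trivially by hypothesis. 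Hence $\cH_0'N=0$ and the action descends to an irreducible $\tcH$-module, proving \textbf{(a)}. For \textbf{(b)}, fix a nonzero homogeneous $v\in N$; the identity $(d_{2r+1}^j)^2=a\,d_{2r+1}^j$ recorded just before Lemma \ref{lem-eigen} gives joint eigenvalues $\mu_{2r+1}^j\in\{0,a\}$, and I set $\varphi_\mu(r)_j=-$ or $+$ according to which value occurs. Lemma \ref{lem-eigen} then shows that the positive part $\tcH^+_{\varphi_\mu}$ annihilates $v$, so $v$ is a $\varphi_\mu$-highest weight vector of $K$-weight $a$; irreducibility yields $N\cong L_{\varphi_\mu}(\tcH,a)$, and when $a\neq 0$ the irreducibility of $M_{\varphi_\mu}(\tcH,a)$ from the preceding corollary (following \cite{BBFK13}) upgrades this to $N\cong M_{\varphi_\mu}(\tcH,a)$.

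\textbf{Part (c).} With $a=0$, every bracket in $\tcH$ is a multiple of $K$ and hence vanishes, making $\tcH$ super-abelian with each odd element squaring to zero. Lemma \ref{lem-eigen} specialized to $a=0$ forces $x^j_{2r+1}N=0$ for every $j,r$. The remaining odd operators $x^j_{-(2r+1)}$ anticommute and square to zero, so each has a nonzero kernel; the anticommutation ensures this kernel is stable under every other odd operator, hence it is a nonzero $\tcH$-submodule of $N$. Irreducibility then forces every $x^j_{-(2r+1)}$ to act as zero on $N$, and combined with the trivial action of $\fh$ and $K$ we conclude that $N$ is one-dimensional and trivial.

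\textbf{Part (d).} This is the main obstacle. Writing
\[
\ch M_{\varphi_\mu}(\tcH,a) \;=\; z^{c}\prod_{k\geq 0}\prod_{i=1}^{n-1}\bigl(1+z^{\epsilon_{k,i}(2k+1)}\bigr),
\]
with $\epsilon_{k,i}=\mp 1$ according to $\varphi_\mu(k)_i=\pm$, the ``if'' direction is immediate: if all but finitely many of the $\epsilon_{k,i}$ share one sign, then the remaining infinite product is a bona fide formal Laurent series in $z$ with finite coefficients at each power. For the converse, assume $\varphi_\mu$ has infinitely many $+$'s and infinitely many $-$'s; I would split into two sub-cases. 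If $\varphi_\mu(k)$ is ``mixed'' for infinitely many $k$, then for each such $k$ I pick indices with $\varphi_\mu(k)_i=-$ and $\varphi_\mu(k)_j=+$ and form the $D$-degree-zero monomial $x^i_{2k+1}x^j_{-(2k+1)}$, obtaining infinitely many linearly independent vectors in the degree-zero subspace. Otherwise $\varphi_\mu(k)$ is uniform of one sign for all sufficiently large $k$, so $K_\pm=\{k:\varphi_\mu(k)=(\pm,\ldots,\pm)\}$ are both infinite and together cofinite in $\N$; one of them, say $K_-$, then has density at least $1/2$, a standard sumset argument gives that $K_-+K_-$ is cofinite in $\N$, and hence $\Sigma_2 K_- \cap \Sigma_2 K_+$ is infinite. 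For each coincidence $k_1+k_2=m_1+m_2$ with $k_1<k_2\in K_-$ and $m_1<m_2\in K_+$, the four-generator monomial $x^1_{2k_1+1}x^1_{2k_2+1}x^1_{-(2m_1+1)}x^1_{-(2m_2+1)}$ (which exists because $n-1\geq 2$ supplies enough coordinates) has $D$-degree $0$, yielding the required infinite-dimensional component. The main difficulty is packaging this sumset-intersection argument cleanly enough to handle both sub-cases uniformly, since the statement ultimately reduces to a non-trivial combinatorial fact about subsets of $\N$ whose union is cofinite.
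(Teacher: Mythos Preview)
Your arguments for (a) and (b) are the paper's, with Dixmier--Schur spelled out. In (c) there is a minor slip: Lemma~\ref{lem-eigen} is stated under the running hypothesis $a\neq 0$ (see the sentence ``Assume that $a\neq 0$'' just before it), and the proof of its second implication genuinely uses this, so you cannot invoke it when $a=0$. Nothing is lost, though, since your kernel argument for $x^j_{-(2r+1)}$ applies verbatim to $x^j_{2r+1}$ as well. For the ``if'' half of (d) your character computation is valid; the paper instead shows $M_{\varphi_\mu}(\tcH,a)\simeq M_{\varphi_\nu}(\tcH,a)$ directly by applying $x^j_{2r+1}$ at each place of disagreement to manufacture a $\varphi_\nu$-highest vector, an argument that simultaneously yields the isomorphism criterion in the Proposition following the Theorem.

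The genuine gap is in your converse for (d), Case~2. The step ``one of $K_\pm$ has density $\geq 1/2$, so $K_-+K_-$ is cofinite'' is false as stated. Take $K_-=\bigcup_{m\geq 0}[2^{2m},2^{2m+1})$ and $K_+$ its complement in $\Z_{\geq 1}$: both are infinite with cofinite union, yet each has lower density $1/3$, and $K_-+K_-$ omits $2^{2m+2}-1$ for every $m\geq 1$ (any two summands below $2^{2m+2}$ must lie in $K_-\cap[1,2^{2m+2})\subseteq[1,2^{2m+1})$ and hence sum to at most $2^{2m+2}-2$); by symmetry $K_++K_+$ is not cofinite either, so the sumset route does not close. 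The paper sidesteps this by working with $\Omega_\mu=\{k\text{ odd}:x^j_kv\neq 0\text{ for some }j\}$ and forming ``cycles'' (tuples in $\Omega_\mu$ summing to $0$), each yielding a distinct degree-zero basis vector $\prod_i x^{j_i}_{k_i}v$. The paper does not spell out why infinitely many cycles exist; the feature your sumset heuristic misses, and which makes this work, is the covering property $\{|k|:k\in\Omega_\mu\}=\{1,3,5,\ldots\}$. Your Case~1 is exactly the situation where both $\pm(2r+1)\in\Omega_\mu$ for infinitely many $r$, each giving the two-term cycle $(2r+1,-(2r+1))$. In Case~2, the sets $\Omega_\mu\cap\Z_{>0}$ and $-(\Omega_\mu\cap\Z_{<0})$ eventually partition the positive odd integers into two infinite pieces, so the partition must switch infinitely often in each direction; pairing a switch $p\in\Omega_\mu,\ -(p+2)\in\Omega_\mu$ with a switch $-q\in\Omega_\mu,\ q+2\in\Omega_\mu$ (with $|p-q|\neq 2$) produces the four-term cycle $(p,\,q+2,\,-(p+2),\,-q)$, and there are infinitely many such pairs. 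This elementary ``switch'' argument replaces your density claim and actually goes through.
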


\begin{proof} Part(a): this follows from the fact that $N$ is irreducible and $\Z$-graded, $\cH_0'$ is central and its elements have degree different from $0$. Part(b): let $N_i\neq 0$ such that all $d_{2r+1}^j $ are simultaneously diagonalizable with eigenvalues $\mu_{2r+1}^j$. Set $\mu^j=(\mu_{2r+1}^j)$, $r\in \Z_{\geq 0}$, $j=1, \ldots, n-1$. By Lemma \ref{lem-eigen}, each
  $(\mu^1, \ldots, \mu^{n-1})$ defines a function $\varphi_{\mu}: \N\to \{\pm\}^{n-1}$, where $\varphi_{\mu}(k)_j=+$ if $\mu_{2k+1}^j=0$ and  $\varphi_{\mu}(k)_j=-$ if $\mu_{2k+1}^j=a$. Then  $v$ is a $\varphi_{\mu}$-highest vector and $N\simeq L_{\varphi_{\mu}}(\tcH, \lambda)$ up to a shift of gradation. Part(c) is clear. Part(d): without loss of generality we assume that  $\nu_{2k+1}^j=0$ for all $k\in \Z_{\geq 0}$, $j=1, \ldots, n-1$. Clearly, $M_{\varphi_{\nu}}(\tcH, \lambda)$ has finite dimensional graded components. Suppose that $\varphi_{\mu}$ differs   from $\varphi_{\nu}$ only in $s$ places. Consider a nonzero $\varphi_{\mu}$-highest vector  $v$. If $w=x_{2k+1}^jv\neq 0$ for some $k\geq 0$ and $j=1, \ldots, n-1$, then $x_{2k+1}^j w =0$ and thus $w$ is a $\varphi_{\mu'}$-highest vector where $\varphi_{\mu'}$ differs from $\varphi_{\nu}$  in $s-1$ places. Continuing we find  a $\varphi_{\nu}$-highest vector in $M_{\varphi_{\mu}}(\tcH, \lambda)$. Since $M_{\varphi_{\mu}}(\tcH, \lambda)$ is irreducible when $a\neq 0$ we conclude that $M_{\varphi_{\mu}}(\tcH, \lambda)\simeq M_{\varphi_{\nu}}(\tcH, \lambda)$  and hence it has  finite dimensional graded components. Conversely, assume that $M_{\varphi_{\mu}}(\tcH, \lambda)$ has finite dimensional graded components and let $v$ be  a nonzero $\varphi_{\mu}$-highest vector.  Denote by $\Omega_{\mu}$ the subset of odd integers defined as follows: $k\in \Omega_{\mu}$ if $x_{k}^j v\neq 0$ for at least one $j=1, \ldots, n-1$.  A sequence $(k_1, \ldots, k_r)$ of  $\Omega_{\mu}$ is called \emph{cycle} if $\sum_{i=1}^r k_i=0$. Suppose $\Omega$ contains infinitely many positive as well as negative odd integers. Then one can form infinitely many cycles. Each such cycle $(k_1, \ldots, k_r)$ yields a basis element $\Pi_{i=1}^r  x_{k_i}^{j_i} v$ of $M_{\varphi_{\mu}}(\tcH, \lambda)$ which is a contradiction. Hence, $\Omega$ contains only finitely many positive or only finitely negative odd integers. This means that  $\varphi_{\mu}$ differs from $\varphi_{\nu}$  only in finitely many places, where $\nu_{2k+1}^j=0$ for all $k\in \Z_{\geq 0}$, $j=1, \ldots, n-1$, or $\nu_{2k+1}^j\neq 0$ for all $k\in \Z_{\geq 0}$, $j=1, \ldots, n-1$.
\end{proof}

\begin{rem}
We conjecture that any irreducible $\Z$-graded $\tcH$-module is diagonal. 

\end{rem}

We also have the following isomorphism criterion.

\begin{prop}
We have that $M_{\varphi_{\mu}}(\tcH, a)\simeq M_{\varphi_{\mu'}}(\tcH, a')$ (up to a shift of gradation) if and only if $a=a'$ and $\varphi_{\mu}$ and $\varphi_{\mu'}$ differ only in finitely many places.
\end{prop}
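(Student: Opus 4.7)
The plan is to treat the case $a\neq 0$, where $M_\varphi(\tcH,a)$ is irreducible by the argument in the proof of the preceding corollary; the degenerate case $a=0$ behaves differently and would need to be addressed separately. First, the equality $a=a'$ follows because any nonzero $\tcH$-intertwiner must respect the central action of $K$, which is scalar multiplication by $a$ on the source and by $a'$ on the target.

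For the forward direction, I would take a graded isomorphism $\psi\colon M_{\varphi_\mu}(\tcH,a)\to M_{\varphi_{\mu'}}(\tcH,a)$ (up to a grading shift), with respective highest-weight vectors $v_\mu$ and $v_{\mu'}$. Using the PBW identification $M_{\varphi_{\mu'}}(\tcH,a)=\Lambda(\tcH_{\varphi_{\mu'}}^-)\cdot v_{\mu'}$, which is valid because the relations in Proposition~\ref{prop-Heis} force distinct creation operators in $\tcH_{\varphi_{\mu'}}^-$ to anticommute in $\bU(\tcH)$, I would write $\psi(v_\mu)=P v_{\mu'}$ for a nonzero finite Grassmann polynomial $P$. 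For each pair $(k,j)$ with $\varphi_\mu(k)_j\neq \varphi_{\mu'}(k)_j$, exactly one of $x_{2k+1}^j$ or $x_{-2k-1}^j$, call it $g_{k,j}$, lies in $\tcH_{\varphi_{\mu'}}^-\cap \tcH_{\varphi_\mu}^+$. Since $g_{k,j}\in \tcH_{\varphi_\mu}^+$ annihilates $v_\mu$, it annihilates $\psi(v_\mu)=Pv_{\mu'}$; and since $g_{k,j}\in \tcH_{\varphi_{\mu'}}^-$ acts on $Pv_{\mu'}$ by left Grassmann multiplication, this forces $g_{k,j}\wedge P=0$, meaning every monomial of $P$ must contain $g_{k,j}$ as a factor. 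If infinitely many flipped pairs existed, then $P$ would have every monomial divisible by infinitely many distinct odd generators, which is impossible for a finite Grassmann polynomial.

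For the reverse direction, suppose the discrepancies occur at only finitely many pairs $(k_1,j_1),\ldots,(k_m,j_m)$, and for each $i$ pick $g_i\in \tcH_{\varphi_{\mu'}}^-\cap \tcH_{\varphi_\mu}^+$ as above. I would set
\[
w\defeq g_1 g_2\cdots g_m\cdot v_{\mu'}\in M_{\varphi_{\mu'}}(\tcH,a)
\]
and verify that $w$ is a $\varphi_\mu$-highest vector. For $x\in \tcH_{\varphi_\mu}^+\cap \tcH_{\varphi_{\mu'}}^+$, the bracket rule $[x_{2r+1}^j,x_{2s+1}^i]=\delta_{ij}\delta_{r+s+1,0}K$ in $\tcH$ gives $[x,g_i]=0$, since the unflipped pair containing $x$ is distinct from each flipped pair containing a $g_i$; hence $x$ supercommutes past the product $g_1\cdots g_m$ and then annihilates $v_{\mu'}$. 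For $x=g_i$, Grassmann anticommutativity together with $g_i^2=0$ in $\bU(\tcH)$ yields $g_i w=0$. Thus $\bU(\tcH)\cdot w$ is a nonzero submodule of $M_{\varphi_{\mu'}}(\tcH,a)$ that is simultaneously a quotient of $M_{\varphi_\mu}(\tcH,a)$; the irreducibility of both Verma modules (valid when $a\neq 0$) then forces the induced map to be an isomorphism, shifted in grading by $\sum_i\deg g_i$.

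The main bookkeeping obstacle is tracking the interplay of the two triangular decompositions $\tcH_{\varphi_\mu}^\pm$ and $\tcH_{\varphi_{\mu'}}^\pm$ on the individual matched Heisenberg pairs; once it is clear that in $\tcH$ the only nonzero brackets among odd generators are between matched partners $x_{2r+1}^j$ and $x_{-2r-1}^j$, the problem reduces to an elementary manipulation in a Grassmann algebra on anticommuting odd variables.
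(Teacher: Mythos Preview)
Your proof is correct and follows essentially the same strategy as the paper: locate a $\varphi_\mu$-highest vector inside $M_{\varphi_{\mu'}}(\tcH,a)$ and invoke irreducibility (for $a\neq 0$) to conclude. The paper does the reverse direction one flip at a time via the operators $d_{2r+1}^j$, whereas you write down the product $g_1\cdots g_m v_{\mu'}$ in one stroke; and for the forward direction your Grassmann-divisibility argument (each $g_{k,j}$ must divide the finite polynomial $P$) is considerably more explicit than the paper's one-line ``one can obtain a $\varphi_{\mu'}$-highest weight vector by finitely many actions of elements $x_{\pm(2r+1)}^j$''. Both you and the paper tacitly restrict to $a\neq 0$, which you flag and the paper does not.
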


\begin{proof}The condition $a=a'$ is clear. Assume that  for some $r$ and $j$,  $d_{2r+1}^j$ has an  eigenvector $v\in M_{\varphi_{\mu}}(\tcH, a)$  with eigenvalue $\mu_{2r+1}^j=a$. Set $w=x_{2r+1}^jv\neq 0$. Then $x_{2r+1}^jw=0$ and hence $w$ is a $\varphi_{\nu}$-highest vector where $\nu_{2k +1}^i=\mu_{2k +1}^i$ if $k\neq  r$ or $i\neq j$, while  $\nu_{2r +1}^j=0$. We have $M_{\varphi_{\nu}}(\tcH, a)\simeq M_{\varphi_{\mu}}(\tcH, a)$.  Similarly, we can change finitely many nonzeros $\mu$'s to zeros. 

Conversely, if we have the isomorphism, then one can obtain a $\varphi_{\mu'}$-highest weight vector by finitely many actions of elements $x_{\pm(2r+1)}^j$ on a $\varphi_{\mu}$-highest weight vector. This implies the statement.
\end{proof}

\section{Irreducibility of generalized Verma type modules}
In this section we prove our main result which is the following theorem.

\begin{theo}\label{thm-extmain}
\begin{enumerate}
\item \label{item1-thm-extmain} $M(\hm, \hk; L(\hk, \lambda))$ and $M(\hs, \lambda)$ are reducible for any $\hs\in \{\hg, \hm, \hk, \cH\}$.
\item \label{item2-thm-extmain} If $\det D_\delta^X(\lambda)\neq 0$, then there is a bijection between submodules of $M(\hm, \hk; L(\hk, \lambda))$ and ideals of $\cS$.
\item \label{item3-thm-extmain} If $\det D_\delta^X(\lambda)\neq 0$, then $M(\hg, \hm; L(\hm, \lambda))$ is irreducible.
\end{enumerate} 
\end{theo}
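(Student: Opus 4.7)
The plan is to handle the three parts of Theorem~\ref{thm-extmain} in turn. Parts (a) and (b) are relatively direct extensions of the argument in Proposition~\ref{prop:ideals.S.bij.subm.M(H)}, while part (c) is the substantive new statement and requires a dedicated lowering argument inside the generalized Verma construction.

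For part (a), the key observation is that the commutative algebra $\cS = \bU(\cH_0^-) = \bU(\fh^X\otimes t^{-2}\C[t^{-2}])$ is central in $\bU(\hm)$: this follows from the bracket rules since $\fh^X$ is the center of $\fm(X)$ and $\cH_0^-$ pairs trivially with every bracket produced by $\hk$ (the products $(-2k)\cdot j$ are always even, so only the $[,]_0$-bracket enters). Although $\cS$ does not commute with $L(\fu^+)$ in $\bU(\hg)$, one checks that $[e_\beta(j),h(-2k)]v_\lambda = -\beta(h)\,e_\beta(j-2k)v_\lambda = 0$ because $e_\beta(j-2k)\in\hg^+$ for every $\beta\in\dot{\D}^+\setminus\dot{\D}(X)^+$. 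Hence for any proper ideal $J\subsetneq\cS$, the subspace $\bU(\hg^-)Jv_\lambda$ is a proper nonzero submodule of $M(\hg,\lambda)$, giving reducibility; the analogous argument covers $M(\hm,\lambda)$ and $M(\hm,\hk;L(\hk,\lambda))$. For $M(\hk,\lambda)$, where $\cS\not\subseteq\bU(\hk)$, the role of $\cS$ is played by products $h(-2r-1)h(2r+1)\in\bU(\hk)$ for $h\in\fh_X$, whose center-modulo-Cartan parts generate proper submodules in the same way. Part (b) is established by repeating the PBW/minimality argument of Proposition~\ref{prop:ideals.S.bij.subm.M(H)} verbatim, with $\C v_\lambda$ replaced by $L(\hk,\lambda)$: set $J_N := \{a\in\cS : a\cdot(1\otimes L(\hk,\lambda))\subseteq N\}$, obtain $J_NM\subseteq N$ by centrality, and for the converse use $\det D_\delta^X(\lambda)\ne 0$ to normalize a basis of $\fh^X$ so that $\lambda([X_{i,m},Y_{k,m}])=\delta_{ik}$; then a nonzero element of $\pi(N)\subseteq M/J_NM$ with minimal leading $\cH_1^-$-PBW monomial can be strictly reduced by applying the appropriate $Y_{i,m}$, yielding a contradiction.

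For part (c), let $M := M(\hg,\hm;L(\hm,\lambda))$. By PBW, $M\cong\bU(L(\fu^-))\otimes_\C L(\hm,\lambda)$, and $1\otimes L(\hm,\lambda)$ is a cyclic $\bU(\hg)$-generator. It therefore suffices to prove $W\cap (1\otimes L(\hm,\lambda))\ne 0$ for any nonzero submodule $W\subseteq M$: irreducibility of $L(\hm,\lambda)$ over $\hm$ then forces $W\supseteq 1\otimes L(\hm,\lambda)$, whence $W=M$. Grade $M$ by $L(\fu^-)$-PBW length and suppose for contradiction that $W$ has no nonzero element in degree $0$. Pick $0\ne w\in W$ of minimal positive degree $d$; minimality forces $L(\fu^+)\cdot w = 0$, for otherwise some $x\cdot w$ with $x\in L(\fu^+)$ lies in $W$ of strictly lower positive degree (or in degree $0$, producing the sought intersection). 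To contradict $L(\fu^+)\cdot w=0$, fix a lex-by-length ordering on PBW monomials of $\bU(L(\fu^-))$, decompose $w = \sum_\gamma u_\gamma\otimes v_\gamma$ with $v_\gamma\ne 0$, and choose $\gamma_0$ maximal with $u_{\gamma_0} = e_{\beta_1}(k_1)\cdots e_{\beta_d}(k_d)$. Applying $f_{\beta_d}(-k_d)\in L(\fu^+)$ and using that $L(\fu^+)$ acts as zero on $L(\hm,\lambda)\subset M$, the top-order surviving contribution equals $u'\otimes\bigl([f_{\beta_d}(-k_d),e_{\beta_d}(k_d)]\cdot v_{\gamma_0}\bigr)$ with $u' = e_{\beta_1}(k_1)\cdots e_{\beta_{d-1}}(k_{d-1})$; by the queer affine bracket rules, $[f_{\beta_d}(-k_d),e_{\beta_d}(k_d)]\in\hh\subseteq\hm$ (for even $k_d$ it equals $h_{\beta_d}\in\fh$; for odd $k_d$ it equals $\iota(e_{\beta_d}f_{\beta_d}+f_{\beta_d}e_{\beta_d}) + 2\tr(e_{\beta_d}f_{\beta_d})K$).

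The main obstacle is showing that this top-order term is nonzero in $L(\hm,\lambda)$ for at least one admissible choice of $(\beta_d,k_d)$ as $w$ ranges over minimal-degree elements of $W$. The problematic case is when $\lambda(h_\beta)=0$ for every $\beta$ appearing in $u_{\gamma_0}$, so the even-$k$ contribution vanishes; the hypothesis $\det D_\delta^X(\lambda)\ne 0$ is used precisely to rule out simultaneous vanishing of the odd-$k$ contributions. By Proposition~\ref{prop:ideals.S.bij.subm.M(H)}, $L(\cH,\lambda)\cong\Lambda(\cH_1^-)$ is a faithful Heisenberg-type module exactly when $\det D_\delta^X(\lambda)\ne 0$; via part (b) proved above, the $\cH$-action on the cyclic subspace of $L(\hm,\lambda)$ inherits this faithfulness, so the imaginary-direction pieces $\iota(\cdots)+2\tr(\cdots)K$ arising from odd-$k$ brackets act nontrivially on $v_{\gamma_0}$ after pairing with appropriate elements of $\cS v_\lambda\subseteq L(\hm,\lambda)$. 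Combined with the bookkeeping that all cross-terms $[f_{\beta_d}(-k_d),e_{\beta_j}(k_j)]$ for $j<d$ produce PBW monomials strictly below $u'$ in the chosen lex order and therefore cannot cancel the top contribution, this gives $f_{\beta_d}(-k_d)\cdot w\ne 0$, contradicting minimality and completing the proof of (c). A secondary technical difficulty, worth flagging in advance, is the non-symmetrizability of $\hg=\fq(n)^{(2)}$: the Casimir and Weyl-group shortcuts available in the symmetrizable affine case of \cite{CF18} are not available here, so the entire lowering estimate must be carried out by direct PBW computation.
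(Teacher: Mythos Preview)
Your treatment of parts (a) and (b) is essentially in line with the paper: reducibility comes from the action of the central commutative algebra $\cS=\bU(\cH_0^-)$, and the bijection in (b) is the same PBW/minimality argument as in Proposition~\ref{prop:ideals.S.bij.subm.M(H)} with $L(\hk,\lambda)$ in place of $\C v_\lambda$. (Your handling of the $\hk$ case in (a) is vague; note that the paper's own justification here is also only a one-line reference.)

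Part (c), however, has a genuine gap, and it is precisely where the hypothesis $\det D_\delta^X(\lambda)\neq 0$ does real work. You apply $f_{\beta_d}(-k_d)\in L(\fu^+)$ with the shift $-k_d$ \emph{fixed} by the last factor of the leading monomial. The resulting commutator $[f_{\beta_d}(-k_d),e_{\beta_d}(k_d)]$ then lies in $\hh$ and acts on $v_{\gamma_0}$ as a scalar --- and nothing prevents that scalar from vanishing. Your claim that $\det D_\delta^X(\lambda)\neq 0$ rules out this vanishing is not correct: that determinant controls the pairing $\fh^X(1)\times\fh^X(-1)\to\hh$, whereas the Cartan element you produce is $h_{\beta_d}$ or $\iota(h'_{\beta_d})+2K$, which need not lie in $\fh^X$ and whose value at the weight of $v_{\gamma_0}$ (not at $\lambda$) is what matters. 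Furthermore, since you do not first reduce to simple root factors, the cross-commutators $[f_{\beta_d},e_{\beta_j}]$ for $j<d$ may land in $\fu^-$, producing contributions of degree $d$ rather than $d-1$; your assertion that all such terms are ``strictly below $u'$'' is unjustified, and consequently your minimal-degree argument does not close.

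The paper's approach avoids both problems. First it reduces (via an auxiliary lowering step) to the case where every factor in the maximal monomials is a \emph{simple} root vector, so that $[e_{i_l},f_{i_j}]_0=\delta_{i_l,i_j}h_{i_l}$ and no $\fu^-$-valued cross terms appear. Then, crucially, it acts by $e_{i_l}(m)$ with $|m|$ chosen very large and of suitable parity, so that the surviving top term is $\bar f_{\max}^{\hat l}\, h_{i_l}(m+b_l')\,x_{\max}$ with $h_{i_l}(m+b_l')\in \cH_1^-$. The hypothesis $\det D_\delta^X(\lambda)\neq 0$ enters through Corollary~\ref{cor:L(m,lambda)-free-U(H_1^-)}: it gives $L(\hm,\lambda)\cong \Lambda(\cH_1^-)\otimes L(\hk,\lambda)$, so elements of $\cH_1^-$ act injectively on $L(\hm,\lambda)$, and therefore $h_{i_l}(m+b_l')\,x_{\max}\neq 0$. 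That injectivity is the mechanism you are missing; without it, your degree-lowering step cannot be guaranteed to produce a nonzero vector.
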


The next two results imply Theorem~\ref{thm-extmain} items \eqref{item1-thm-extmain} and \eqref{item2-thm-extmain}.

\begin{cor}
$M(\hs, \lambda)$ is reducible for any $\hs\in \{\hg, \hm, \hk, \cH\}$.
\end{cor}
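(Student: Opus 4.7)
The plan is to exhibit an explicit proper nonzero submodule in each $M(\hs,\lambda)$ with $\hs\in\{\hg,\hm,\hk,\cH\}$, in parallel to the strategy used in Proposition~\ref{prop:ideals.S.bij.subm.M(H)}. For $\hs=\cH$ the assertion is already part of that proposition, so nothing further is required. For the remaining three cases I would build the submodule from the commutative subalgebra $\cS=\bU(\cH_0^-)$ and its augmentation ideal $\cS^+$.

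First I would handle $\hs=\hm$. The step I would do first is to verify that $\cH_0^-$ is $\ad(\hm)$-invariant. Every element of $\cH_0^-$ has an even $t$-exponent, so all brackets with elements of $\hm$ fall into the even-case formula $[x,y]_0(k+m)$; combined with the defining property $\alpha(\fh^X)=0$ for $\alpha\in\dot{\D}(X)$, one gets $[h\otimes t^{-2k},g_\alpha\otimes t^l]=\alpha(h)g_\alpha\otimes t^{l-2k}=0$ for $h\in\fh^X$ and $g_\alpha\otimes t^l\in\hk$; the centrality of $\cH_0$ in $\cH$ follows from the same parity reasoning, and $D$ merely rescales $\cH_0^-$. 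Once $\cH_0^-$ is $\ad(\hm)$-invariant, $\cS^+$ is an $\ad(\hm)$-stable subspace, the left ideal $\bU(\hm)\cdot\cS^+$ coincides with the right ideal $\cS^+\cdot\bU(\hm)$, and $\cS^+\cdot M(\hm,\lambda)$ is an $\hm$-submodule. Then I would use PBW (taking $\cH_0^-\subseteq \hm^-$ as part of the basis) to show that $M(\hm,\lambda)$ is free over $\cS$ with cyclic generator $v_\lambda$, whence $v_\lambda\notin \cS^+\cdot M(\hm,\lambda)$ and the submodule is proper.

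Next, for $\hs=\hg$, I would use induction in stages. The isomorphism $M(\hg,\lambda)\cong M(\hg,\hm;M(\hm,\lambda))=\bU(\hg)\otimes_{\bU(\hm+L(\fu^+))}M(\hm,\lambda)$, with $L(\fu^+)$ acting trivially, together with the PBW-freeness of $\bU(\hg)$ over $\bU(\hm+L(\fu^+))$, shows that the proper submodule $N=\cS^+\cdot M(\hm,\lambda)$ of $M(\hm,\lambda)$ from the previous paragraph induces up to a proper nonzero submodule $\bU(\hg)\otimes_{\bU(\hm+L(\fu^+))}N$ of $M(\hg,\lambda)$. This sidesteps the fact that $\cS^+$ is not $\ad(\hg)$-invariant (the elements of $L(\fu^\pm)$ do not commute with $\cH_0^-$), since we never need to construct the $\hg$-submodule directly from $\cS^+v_\lambda$.

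The case $\hs=\hk$ is where I expect the main obstacle to lie: $\cH_0^-$ is not contained in $\hk$, so the argument above does not apply directly. The plan is to construct an analogous commutative subalgebra inside $\bU(\hk)$ using the imaginary root spaces $\fh_X\otimes t^{r}$ of $\hk$ together with the central part $\fh^X\oplus\C K$ of $\hk$, and then rerun the ideal/PBW-freeness argument to exhibit a proper submodule of $M(\hk,\lambda)$. The difficulty is that, unlike $\fh^X$, the space $\fh_X$ does \emph{not} annihilate the roots of $\fk$, so the naive analog of $\cH_0^-$ built from $\fh_X\otimes t^{2\Z_{<0}}$ fails to be $\ad(\hk)$-invariant; in addition, the queer odd bracket $[x,y]_1=\iota(xy+yx)$ introduces symmetric products that must be tracked carefully when certifying $\ad$-stability. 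If pinning down a usable central subalgebra proves intractable, a fallback is to produce a singular vector in $M(\hk,\lambda)$ directly from the twisted-affine structure of $\hk$ over the reductive Lie algebra $\fk$.
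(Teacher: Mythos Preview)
Your treatment of $\hs=\cH$, $\hm$, and $\hg$ is correct and is exactly the fleshing-out of the paper's one-line citation of Proposition~\ref{prop:ideals.S.bij.subm.M(H)}: for $\cH$ and $\hm$ the augmentation ideal $\cS^+\subseteq\bU(\cH_0^-)$ yields a nonzero proper submodule because $\cH_0^-$ is $\ad$-stable in the relevant algebra (you are right that $D$ only rescales it, so ``$\ad$-invariant'' is the accurate word, not ``central''), and for $\hg$ the induction-in-stages $M(\hg,\lambda)\cong M(\hg,\hm;M(\hm,\lambda))$ transports the proper submodule upward by freeness. The paper does not spell out the $\hg$ step, but your argument is the natural way to read its citation.

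The gap is at $\hs=\hk$, and you have located it precisely: $\cH_0^-\not\subseteq\hk$, so the $\cS^+$-mechanism is unavailable, and the obvious substitute $\fh_X\otimes t^{-2}\C[t^{-2}]$ is neither central nor an ideal in $\hk$ (since $[h\otimes t^{-2k},g_\alpha\otimes t^{l}]=\alpha(h)g_\alpha\otimes t^{l-2k}\neq 0$ for $\alpha\in\dot\D(X)$, $h\in\fh_X$). Your plan to manufacture an analogous central piece inside $\bU(\hk)$ will therefore not succeed as stated; there is no abelian $\ad(\hk)$-stable subspace of $\hk^-$ playing the role of $\cH_0^-$. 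The fallback you mention---producing a singular vector directly from the affine-type structure of $\hk$---is the right direction, but it is not carried out, so the proposal is incomplete for this case. Note that the paper's proof is a bare citation and does not itself indicate how $\hk$ is handled; whatever argument is intended there is not the $\cS^+$-argument of Proposition~\ref{prop:ideals.S.bij.subm.M(H)} applied verbatim.
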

\begin{proof}
It follows from Proposition~\ref{prop:ideals.S.bij.subm.M(H)}.
\end{proof}

\begin{prop}\label{prop:bij.ideals.Verma-hm}
Let $M = M(\hm, \hk; L(\hk, \lambda))$, $L=L(\hk, \lambda)$, and assume that $\det D_\delta^X(\lambda)\neq 0$. Then there is a bijection between submodules of $M$ and ideals of $\cS$; $\cS^+M$ is a maximal proper submodule of $M$; and $L(\hm, \hk; L)\cong \Lambda(\cH_{1}^-)\otimes_\C L$ as vector spaces.
\end{prop}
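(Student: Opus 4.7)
The plan is to adapt the proof of Proposition~\ref{prop:ideals.S.bij.subm.M(H)}. First, PBW gives the vector-space identification $M \cong \cS \otimes_\C \Lambda(\cH_1^-) \otimes_\C L$, and the elements of $\cS$ commute with all of $L(\fm(X))\oplus\C K$ inside $\bU(\hm)$ because $\fh^X$ is the centre of $\fm(X)$ (and $D$ acts on $\cS$ by a derivation). Any ideal $J\subseteq\cS$ therefore defines an $\hm$-submodule $JM\subseteq M$, and conversely, for any submodule $N\subseteq M$, the set
\[
J_N := \{a\in\cS : a(1\otimes v_\lambda)\in N\}
\]
with $v_\lambda\in L$ the highest weight vector is an ideal of $\cS$ (closed under the $\ad D$-action since the $D$-homogeneous components of elements of $N$ remain in $N$). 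The easy inclusion $J_NM\subseteq N$ is immediate: any $m\in M$ factors as $u(1\otimes v_\lambda)$ with $u\in\bU(\hm)$, and for $j\in J_N$ one has $jm = u\cdot j(1\otimes v_\lambda)\in uN\subseteq N$.

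For the reverse inclusion $N\subseteq J_NM$, I would set $V := M/J_NM$, $W := \pi(N)$, and $R := \cS/J_N$, so that $V = R\otimes\Lambda(\cH_1^-)\otimes L$ is free over $R$. The first step is to show $W\cap R\cdot(1\otimes L) = 0$: a lift $\tilde r(1\otimes\ell)\in N$ of such an element propagates through the $\bU(\hk)$-action to give $\tilde r(1\otimes L)\subseteq N$ (reducing $\tilde r$ to its $D$-homogeneous components and inducting on the number of $D$-factors in elements of $\bU(\hk)$, using $[\cS,\hk\setminus\C D]=0$), so $\tilde r(1\otimes v_\lambda)\in N$ and hence $\tilde r\in J_N$. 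Then, assuming $W\neq 0$, the crucial new structural input -- which I expect to be the main obstacle relative to Proposition~\ref{prop:ideals.S.bij.subm.M(H)} -- is that
\[
h_{ik} := [X_{i,m}, Y_{k,m}] = 2\iota(h^i h^k) + 2\tr(h^i h^k)K
\]
lies not just in $\hh$ but in $\fh^X\oplus\C K$: indeed $\iota(h^i h^k)\in\fh^X$, since for $\alpha = \varepsilon_a-\varepsilon_b\in\dot\D(X)$ one has $\alpha(h^i)=0=\alpha(h^k)$, hence $\alpha(\iota(h^ih^k)) = h^i_ah^k_a - h^i_bh^k_b = 0$. Since $\fh^X$ is the centre of $\fm(X)$ and $K$ is central, $h_{ik}$ is in fact central in all of $\hm$ (not merely in $\cH$), so by Schur applied to the irreducible $\hk$-module $L$, it acts on $L$ -- and hence on all of $M$ -- as the scalar $\lambda(h_{ik})=\delta_{ik}$, once the basis $\{h^i\}$ of $\fh^X$ is chosen so that $\lambda(h_{ik})=\delta_{ik}$, which is possible because $\det D_\delta^X(\lambda)\neq 0$.

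With this scalar-action property in hand, the rest copies Proposition~\ref{prop:ideals.S.bij.subm.M(H)} essentially verbatim. Pick a nonzero $v\in W$ whose maximal $Y_{i_0,m_0} = h^{i_0}(-m_0)$ in the $R$-basis expansion of $V$ is minimal, write $v = Y_{i_0,m_0}w + u$ with $w\neq 0$ and $w,u$ involving only strictly smaller $Y$'s, and apply $X_{i_0,m_0}$. The leading contribution gives $[X_{i_0,m_0},Y_{i_0,m_0}]w = h_{i_0}w = w$, while each remaining commutator $[X_{i_0,m_0},Y_{k,n}]$ arising during the super-Leibniz expansion of $X_{i_0,m_0}w$ and $X_{i_0,m_0}u$ vanishes when acting on $L$: if $n<m_0$ the commutator lies in $\cH^+$, commutes with the surrounding $Y$'s by abelianness of $\fh^X$, and then annihilates $L$; if $n=m_0$ then $k<i_0$ and the commutator equals $h_{i_0,k}$, acting on $L$ as $\delta_{i_0,k}=0$. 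Hence $X_{i_0,m_0}v = w\in W$ is nonzero with strictly smaller maximum, contradicting minimality; thus $W=0$ and $N = J_NM$. The remaining assertions are formal: $\cS^+$ is the maximal $\ad D$-stable ideal of $\cS$, so $\cS^+M$ is a maximal proper submodule of $M$ under the bijection, and $L(\hm,\hk;L) = M/\cS^+M \cong \Lambda(\cH_1^-)\otimes_\C L$ as vector spaces.
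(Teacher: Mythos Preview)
Your overall strategy mirrors the paper's, and your explicit verification that $h_{ik}=[X_{i,m},Y_{k,m}]=2\iota(h^ih^k)+2\tr(h^ih^k)K$ lies in $\fh^X\oplus\C K$---and is therefore genuinely central in all of $\hm$---is a real improvement over the paper's terse ``just as in the proof of Proposition~\ref{prop:ideals.S.bij.subm.M(H)}''. With that in hand, your contradiction argument for $W=0$ is correct.

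There is, however, a genuine gap in your step showing $W\cap R\cdot(1\otimes L)=0$. You write ``a lift $\tilde r(1\otimes\ell)\in N$ of such an element'', but a general element of $N\cap\cS L$ is a \emph{sum} $\sum_{i=1}^m a_i\ell_i$ with $a_i\in\cS$ and $\ell_i\in L$ linearly independent, not a single product. Your propagation idea only yields $\sum_i a_i(u\ell_i)\in N$ for each $u$ commuting with $\cS$; this does not isolate any individual $a_jv_\lambda$. The paper closes this gap with the Jacobson Density Theorem: since $L$ is a simple $\hk$-module of countable dimension over $\C$, for each $j$ there exists $u_j\in\bU(\hk)$ with $u_j\ell_i=\delta_{ij}v_\lambda$, whence $u_j\bigl(\sum_i a_i\ell_i\bigr)=a_jv_\lambda\in N$ and $a_j\in J_N$. (The $D$-commutation issue you flag is handled, as you suggest, by working with $D$-homogeneous components, or equivalently by observing that $\cS L\cong\bigoplus_k L[k]^{\oplus\dim\cS_k}$ as a $\hk$-module, a direct sum of pairwise non-isomorphic simples.)

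A minor related slip: in your easy inclusion $J_NM\subseteq N$ you take $u\in\bU(\hm)$ and commute $j\in\cS$ past it, but $[D,\cS]\neq 0$. The paper avoids this by writing $m=ul$ with $u\in\bU(\cH^-)$ and $l\in L$; then $j$ commutes with $u$ (as $\cS\subset Z(\cH)$) and $jl\in N$ follows from $jv_\lambda\in N$ together with $[\cS,\hk^-]=0$.
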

\begin{proof}
Let $J$ be an ideal of $\cS$. Since $[\cH_0, \hm]=0$, it is clear that $JM$ defines a submodule of $M$. In the other direction, for a submodule $N\subseteq M$, we consider the ideal $J_N\subseteq \cS$ such that $N\cap \cS L = J_N L$. We claim that $J_N = \{a\in \cS\mid av_\lambda\in N\}$. Indeed, let $a\in J_N$, and write an arbitrary $v\in L$ as $uv_\lambda$ for some $u\in \bU(\hk^-)$. Then we have $av = auv_\lambda = uav_\lambda\in N$, and hence $J_NL\subseteq N\cap \cS L$. For the other inclusion, write a general element $v=\sum_{i=1}^m a_iv_i\in N\cap \cS L$ with $a_i\in \cS$ and assume that $v_1,\ldots, v_m\in L$ are linearly independent. The fact that $[\cH_0, \hk]=0$ along with the fact that $L$ is a simple $\hk$-module with countable dimension allow us to apply Jacobson Density Theorem to find, for each $i=1,\ldots, m$, an element $u_i\in \bU(\hk^+)$ for which $u_iv=a_iv_\lambda\in N\cap \cS L$. In particular, $a_i\in J_N$ for every $i$, and the claim is proved.

Now we claim that $N = J_NM$. Indeed, let $jm\in J_NM$. Then, writing $m = u l$ with $u\in \bU(\cH^-)$ and $l\in L$, we get that
	\[
jm=jul=ujl\in uN\subseteq N.	
	\]
Thus $J_NM\subseteq N$. For the other inclusion, consider the canonical projection $\pi: M\to V:=M/J_NM$, $W = \pi(N)$, and $R = \cS/J_N$. Notice that $V$ is free as an $R$-module, and that $W\cap RL=\pi(N\cap \cS L)=\pi(J_N L)=0$. Now if we suppose that $W\neq 0$, then we can use the fact that $\det D_\delta^X(\lambda)\neq 0$, and that $[\cH_0, \hm]=0$, to get a contradiction just as in the proof of Proposition~\ref{prop:ideals.S.bij.subm.M(H)}. Thus $W=0$ and the proof is complete.
\end{proof}

\begin{cor}\label{cor:L(m,lambda)-free-U(H_1^-)}
If $\det D_\delta^X(\lambda)\neq 0$, then $L(\hm, \lambda)\cong \Lambda(\cH_{1}^-)\otimes_\C L(\hk, \lambda)$ as vector spaces.
\end{cor}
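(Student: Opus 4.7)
The plan is to identify $L(\hm, \lambda)$ with the unique irreducible quotient $L(\hm, \hk; L(\hk, \lambda))$ of the generalized Verma module $M(\hm, \hk; L(\hk, \lambda))$, and then read off its vector-space structure from Proposition~\ref{prop:bij.ideals.Verma-hm}.

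First, I would verify that $M := M(\hm, \hk; L(\hk, \lambda))$ is a highest weight $\hm$-module of highest weight $\lambda$. Let $v_\lambda \in L(\hk, \lambda)$ denote the image of the canonical generator of $M(\hk, \lambda)$, and set $v := 1 \otimes v_\lambda \in M$. Since $L(\hk, \lambda) = \bU(\hk) v_\lambda$, the vector $v$ generates $M$ as a $\bU(\hm)$-module. Moreover, $v$ has weight $\lambda$ and is annihilated by $\hm^+ = \cH^+ \oplus \hk^+$: the factor $\cH^+$ acts trivially on $v$ by the very construction of $M(\hm, \hk; -)$, while $\hk^+$ annihilates $v_\lambda$ because $v_\lambda$ is the highest weight vector of $L(\hk, \lambda)$.

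It follows that $M$ is a highest weight $\hm$-module of highest weight $\lambda$, hence its unique irreducible quotient $L(\hm, \hk; L(\hk, \lambda))$ is isomorphic to $L(\hm, \lambda)$. On the other hand, Proposition~\ref{prop:bij.ideals.Verma-hm} (applied under the standing hypothesis $\det D_\delta^X(\lambda) \neq 0$) asserts both that the maximal proper submodule of $M$ equals $\cS^+ M$ and that $L(\hm, \hk; L(\hk, \lambda)) \cong \Lambda(\cH_1^-) \otimes_\C L(\hk, \lambda)$ as vector spaces. Combining these two identifications produces the claimed isomorphism $L(\hm, \lambda) \cong \Lambda(\cH_1^-) \otimes_\C L(\hk, \lambda)$.

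The main obstacle has already been cleared in Proposition~\ref{prop:bij.ideals.Verma-hm}, where the Jacobson-density/linear-independence argument of Proposition~\ref{prop:ideals.S.bij.subm.M(H)} was extended to identify the submodule lattice of $M$ with the ideal lattice of $\cS$; all that remains here is the formal comparison of two highest weight modules with highest weight $\lambda$, which is routine.
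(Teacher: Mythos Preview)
Your proposal is correct and follows exactly the paper's approach: the paper's proof simply states that the result follows from $L(\hm,\lambda)\cong L(\hm,\hk;L(\hk,\lambda))$ together with Proposition~\ref{prop:bij.ideals.Verma-hm}. You have merely supplied the routine verification of the first isomorphism (that $M(\hm,\hk;L(\hk,\lambda))$ is a highest weight $\hm$-module of highest weight $\lambda$), which the paper leaves implicit.
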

\begin{proof}
This follows from $L(\hm, \lambda)\cong L(\hm, \hk; L(\hk, \lambda))$ and Proposition~\ref{prop:bij.ideals.Verma-hm}.
\end{proof}

\begin{cor}
Suppose that $\det D_\delta^X(\lambda)\neq 0$.  Then we have the character formula 
	\[
\ch \ L(\hm, \lambda) = e^\lambda \prod_{\alpha\in \D(X)_{{\rm re},0}^+}(1-e^{-\alpha})^{-1} \prod_{\alpha\in \D(X)_{1}^+}(1+e^{-\alpha})   \prod_{\alpha\in \D(\cH^-)_1}(1+e^{-\alpha}),
	\]
where $\D(X)_{{\rm re},0}^+$ denotes the set of real positive even roots of $\hk$.
\end{cor}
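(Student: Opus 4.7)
The proof is short: the idea is to use Corollary~\ref{cor:L(m,lambda)-free-U(H_1^-)} to split $\ch L(\hm,\lambda)$ as a product, then identify each factor.

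By Corollary~\ref{cor:L(m,lambda)-free-U(H_1^-)}, the hypothesis $\det D_\delta^X(\lambda)\neq 0$ gives a vector-space identification $L(\hm,\lambda)\cong \Lambda(\cH_1^-)\otimes_\C L(\hk,\lambda)$. First I would check that this identification respects the $\hh$-weight grading, which is immediate since $\cH_0\subseteq \hh$ acts on each tensor factor via its Cartan action, so weights add across the tensor product. Taking characters then gives
\[
\ch L(\hm,\lambda) \;=\; \ch\Lambda(\cH_1^-)\cdot \ch L(\hk,\lambda),
\]
and the Grassmann factor contributes the standard character $\prod_{\alpha\in \D(\cH^-)_1}(1+e^{-\alpha})$ (with implicit multiplicities).

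It remains to identify $\ch L(\hk,\lambda)$. The plan is to adapt the Jacobson-density / $\cS$-freeness argument of Proposition~\ref{prop:ideals.S.bij.subm.M(H)} to the subalgebra $\hk$, in order to conclude that under the same hypothesis $M(\hk,\lambda)$ is already irreducible, so that $L(\hk,\lambda)=M(\hk,\lambda)$. Once this is in hand, a PBW expansion with respect to the triangular decomposition $\hk=\hk^-\oplus \hh\oplus\hk^+$ immediately produces
\[
\ch L(\hk,\lambda) \;=\; e^\lambda\prod_{\alpha\in \D(X)^+_{{\rm re},0}}(1-e^{-\alpha})^{-1}\prod_{\alpha\in \D(X)^+_{1}}(1+e^{-\alpha}),
\]
and multiplying the two character expressions yields the stated formula.

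The main obstacle will be this second step. The hypothesis $\det D_\delta^X(\lambda)\neq 0$ is phrased in terms of the pairing on $\fh^X\otimes t^{\pm 1}\subseteq \cH$, not directly on $\hk$, so one must verify that the nondegeneracy nonetheless forces $M(\hk,\lambda)$ to be simple. In particular, the imaginary root spaces $\fh_X\otimes t^k$ inside $\hk$ (which do not contribute explicit factors in the stated character) have to be shown not to produce additional submodules. This parallels the analysis carried out for $\cH$ earlier in the paper, but with extra care because $\hk$ also contains the real-root contributions from $\fm(X)^\pm$, and the pairing controlling the relevant density step must be read off from the part of $D_\delta^X$ surviving after restriction.
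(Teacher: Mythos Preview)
Your first step is correct and matches the paper: use Corollary~\ref{cor:L(m,lambda)-free-U(H_1^-)} to factor $\ch L(\hm,\lambda)=\ch\Lambda(\cH_1^-)\cdot\ch L(\hk,\lambda)$.

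The second step, however, contains a genuine error. You propose to show that $M(\hk,\lambda)$ is irreducible under the hypothesis $\det D_\delta^X(\lambda)\neq 0$, so that $L(\hk,\lambda)=M(\hk,\lambda)$. This is false: the paper states explicitly (Theorem~\ref{thm-extmain}\eqref{item1-thm-extmain} and the corollary immediately following it) that $M(\hk,\lambda)$ is \emph{always} reducible. You can also see this directly from your own character computation. A PBW expansion of $M(\hk,\lambda)$ with respect to $\hk=\hk^-\oplus\hh\oplus\hk^+$ produces, in addition to the real-even and odd factors you wrote down, the imaginary even factors
\[
\prod_{k\geq 1}(1-e^{-2k\delta})^{-\dim\fh_X}
\]
coming from $\fh_X\otimes t^{-2}\C[t^{-2}]\subseteq\hk^-$. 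These are \emph{absent} from the claimed formula for $\ch L(\hk,\lambda)$, so necessarily $\ch L(\hk,\lambda)\neq\ch M(\hk,\lambda)$ and $M(\hk,\lambda)$ is reducible. Your final paragraph has the situation inverted: the imaginary root spaces $\fh_X\otimes t^{-2k}$ do not ``fail to produce submodules''; they do produce submodules, and passing to $L(\hk,\lambda)$ kills exactly their contribution. Note also that the argument of Proposition~\ref{prop:ideals.S.bij.subm.M(H)} cannot be transported here as you suggest, since $\fh_X\otimes t^{2k}$ is not central in $\hk$ (it acts nontrivially on $\fm(X)^\pm$), and the hypothesis $\det D_\delta^X(\lambda)\neq 0$ concerns $\fh^X$, not $\fh_X$.

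The paper's proof is one line: it invokes \cite{GS08}, where the character of the irreducible quotient of the standard Verma module over a $\fq^{(2)}$-type algebra (which is what $\hk$ is, for the smaller reductive piece $\fk(X)$) is computed. That reference supplies $\ch L(\hk,\lambda)$ directly; multiplying by $\ch\Lambda(\cH_1^-)$ finishes the proof.
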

\begin{proof}
This follows from \cite{GS08} along with the fact that $L(\hm, \lambda)\cong \Lambda(\cH_{1}^-)\otimes_\C L(\hk, \lambda)$.
\end{proof}

From now on we assume that 	
	\[
\det D_\delta^X(\lambda)\neq 0, \text{ and hence, by Corollary~\ref{cor:L(m,lambda)-free-U(H_1^-)}, } L(\hm, \lambda)\cong \Lambda(\cH_{1}^-)\otimes_\C L(\hk, \lambda) \text{  as vector spaces. }
	\]

Before proving the irreducibility of $M(\hg, \hm; L(\hm, \lambda))$, we introduce an ordered basis of $M(\hg, \hm; L(\hm, \lambda))$. Recall that for a subalgebra $\fa\subseteq \hg$ we defined $\D(\fa) = \{\alpha\in \D\mid \hg_\alpha\subseteq \fa\}$. Let $B(\fu^-) = \{f_i\in \g_{\alpha_i}\mid \alpha_i\in \dot{\D}(\fu^-)\}$ be a basis of $\fu^-$ such that
	\[
f_i<f_j\text{ if } \alpha_i<\alpha_j.
	\]
Now we order the basis $B(L(\fu^-))=\{f_i(m)\mid m\in \Z\}$ of $L(\fu^-)$ so that
\begin{enumerate}
\item if $m$ is odd and $n$ is even, then $f_i(m)<f_j(n)$,
\item if $m,n$ are both even or both odd, then $f_i(m)<f_j(n)$ if $m<n$, or $m=n$ and $f_i<f_j$.
\end{enumerate}

For $r\geq 1$ and $(\bi, 2\bm, \bp) = (i_1,\ldots, i_r, m_1,\ldots, m_r, p_1,\ldots, p_r)\in \Z_t^r \times 2\Z^r\times \Z_{\geq 0}^r$, we set $f_{\bi, 2\bm, \bp} := f_{i_1}(m_1)^{p_1}\cdots f_{i_r}(m_r)^{p_r}\in \bU(L(\fu^-)_0)$ and we define $\deg f_{\bi, 2\bm, \bp} :=\sum p_i$. For monomials of the different degree we let $f_{\bi, 2\bm,\bp} < f_{\bi', 2\bm', \bp'}$ if $\deg f_{\bi, 2\bm,\bp} < \deg f_{\bi', 2\bm', \bp'}$; for monomials of same degree we define $f_{\bi, 2\bm,\bp} < f_{\bi', 2\bm', \bp'}$ if $({\bi, 2\bm, \bp}) < ({\bi', 2\bm',\bp'})$, where the latter order is the reverse lexicographical order. This provides us a totally ordered basis $B(\bU(L(\fu^-)_0)) = \{ f_{\bi, 2\bm, \bp} := f_{i_1}(m_1)^{p_1}\cdots f_{i_r}(m_r)^{p_r}\}$ of $\bU(L(\fu^-)_0)$. For $r\geq 1$ and $(\bi, \bm) = (i_1,\ldots, i_r, m_1,\ldots, m_r)\in \Z_t^r \times (2\Z^r + 1)$, we set $f_{\bi, \bm} := f_{i_1}(m_1)\cdots f_{i_r}(m_r)$ and we define $\deg f_{\bi, \bm} := r$. For monomials of the different degree we let $f_{\bi, \bm} < f_{\bi', \bm'}$ if $\deg f_{\bi, \bm} < \deg f_{\bi', \bm'}$; for monomials of same degree we define $f_{\bi, \bm} < f_{\bi', \bm'}$ if $({\bi, \bm}) < ({\bi', \bm'})$, where the latter order is the reverse lexicographical order. Finally, we let $ f_{\bi', \bm'}<f_{\bi, 2\bm, \bp} $ for all such monomials. By PBW Theorem, we have that $B(\bU(L(\fu^-))) = \{ f_{\bi, 2\bm, \bp} f_{\bi', \bm'}\}$ is a totally ordered basis of $\bU(L(\fu^-))$. 

Let $h_1,\ldots, h_t$ be a basis of $\fh^X$. Then $H_{i,m} := h_i(-m)$ for $i=1,\ldots, t$ and $m\in \{2\Z_{\geq 0}+1\}$ form a basis for $\cH_{1}^-$. In particular, if we let $H_{i,m}\geq H_{k,n}$ if $m\geq n$ or $m=n$ and $i\geq k$, then the monomials $H_{i_1,m_1}\cdots H_{i_s, m_s}$ with $H_{i_1,m_1}>\cdots > H_{i_s,m_s}$ form a basis $B(\cH_{1}^-) $ of $\Lambda(\cH_{1}^-)$.

Since we are assuming $\det D_\delta^X(\lambda)\neq 0$, Corollary~\ref{cor:L(m,lambda)-free-U(H_1^-)} implies that $L(\hm, \lambda)\cong \Lambda(\cH_{1}^-)\otimes_\C L(\hk, \lambda)$ as vector spaces. Let $\{v_i\mid i\in I\}$ be an ordered basis of $L(\hm, \lambda)$, where the order is induced by the order of $\Lambda(\cH_{1}^-)$. We say $f_{\bi, \bm, \bp} f_{\bi', \bm'} v_i < f_{\bi_1, \bm_1, \bp_1}f_{\bi_1', \bm_1'}v_j$ if $f_{\bi, \bm, \bp} f_{\bi', \bm'} < f_{\bi_1, \bm_1, \bp_1}f_{\bi_1', \bm_1'}$ or if $f_{\bi, \bm, \bp} f_{\bi', \bm'} = f_{\bi_1, \bm_1, \bp_1}f_{\bi_1', \bm_1'}$ and $i < j$. Finally, for an element
	\[
u = \sum u_{\bi,\bm,\bp}^j f_{\bi, \bm, \bp} f_{\bi', \bm'} v_j, \text{ with } u_{\bi,\bm,\bp}^j\in \C,
	\]
we define
	\[
\LinSpan(u) := \Span\{f_{\bi, \bm, \bp} f_{\bi', \bm'}\mid u_{\bi,\bm,\bp}^j\neq 0\}.
	\]

For the next result recall that $L(\hm, \lambda)\cong \Lambda(\cH_{1}^-)\otimes_\C L(\hk, \lambda)$ as vector spaces when $\det (D_\delta^X(\lambda))\neq 0$. Also recall that for $\alpha_i\in \dot{\D}$ we have a triple $f_i\in \g^{-\alpha_i}$, $e_i\in \g^{\alpha_i}$, $h_i\in \fh$ such that $[f_i, e_i]_0 = h_i$.

\begin{lem}\label{lem:comm.rel.basis}
Let ${\bar f} = {\bar f}_0 {\bar f}_1 = f_{i_1}(m_1)^{p_1}\cdots f_{i_r}(m_r)^{p_r} f_{i_1'}(m_1')\cdots f_{i_{r'}'}(m_{r'}')\in B(\bU(L(\fu^-)))$, $v\in L(\hm, \lambda)$ be a nonzero vector, and assume that all factors occurring in ${\bar f}$ are simple. For any such factor $f_{i_l}$, we consider $e_{i_l}\in \fn^+ = \fm^+\oplus \fu^+$. If $\det D_\delta^X(\lambda)\neq 0$, then the following hold:
\begin{enumerate}
\item If $\deg {\bar f}_1 = 0$, then there is $0\gg m_l\in \{2\Z+1\}$ or $0\ll m\in \{2\Z+1\}$ such that
\begin{multline*}
e_{i_l}(m){\bar f} v \\
\equiv \sum_{\substack{1\leq j\leq r \\ i_j = i_l}}^r -p_j(p_j-1) f_{i_j}(m+2m_j){\bar f}^{{\hat j}{\hat j}}v + \sum_{\substack{1\leq j\leq r \\ i_j = i_l}}^r \sum_{\xi =j+1}^{r} p_jp_\xi \alpha_{i_\xi}(h_{i_l}) f_{i_\xi}(m_j + m_\xi + m){\bar f}^{{\hat j}{\hat \xi}}v \\
+ \sum_{\substack{1\leq j\leq r \\ i_j = i_l}}^r p_j {\bar f}^{\hat j} h_{i_l}(m+m_j)v  \mod \bU(L(\fu^-))_{(p-2)}\otimes L(\hm, \lambda).
\end{multline*}
\item If $\deg {\bar f}_1 \geq 1$, then there is $0\gg m\in 2\Z$ or $0\ll m\in 2\Z$ such that
\begin{multline*}
e_{i_l}(m){\bar f} v \\
\equiv \biggl( \sum_{\substack{1\leq j\leq r \\ i_j = i_l}}^r -p_j(p_j-1) f_{i_j}(m+2m_j){\bar f}^{{\hat j}{\hat j}}v + \sum_{\substack{1\leq j\leq r \\ i_j = i_l}}^r \sum_{\xi =j+1}^{r} p_jp_\xi \alpha_{i_\xi}(h_{i_l}) f_{i_\xi}(m_j + m_\xi + m){\bar f}^{{\hat j}{\hat \xi}}v \biggr) \\  
+ \biggl( \sum_{\substack{1\leq j\leq r' \\ i_j = i_l}}^{r'} \sum_{\xi =j+1}^{r'} (-1)^{\xi - (j+1)}{\bar \alpha}_{i_\xi}(h_{i_l}){\bar f}_0 f_{i_\xi}(m_j' + m_\xi' + m) {\bar f}_1^{{\hat j}{\hat \xi}} v \\
+ \sum_{\substack{1\leq j\leq r' \\ i_j = i_l}}^{r'} (-1)^{(r'-j)} {\bar f}^{\hat j} h_{i_l}(m+m_j')v \biggr)  \mod \bU(L(\fu^-))_{(p+r'-2)}\otimes L(\hm, \lambda).
\end{multline*}
\end{enumerate}
\end{lem}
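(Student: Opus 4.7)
The plan is to reduce the calculation to a super-commutator computation by exploiting the annihilation of $v$. Since $f_{i_l}\in \fu^-$, its partner $e_{i_l}$ lies in $\fu^+$, so $e_{i_l}(m)\in L(\fu^+)\subseteq \hg^+$, and by construction $L(\fu^+)$ acts trivially on $L(\hm,\lambda)\subseteq M(\hg,\hm;L(\hm,\lambda))$. Hence $e_{i_l}(m)v=0$ and
\[
e_{i_l}(m)\bar f v=[e_{i_l}(m),\bar f]\,v,
\]
where $[\cdot,\cdot]$ denotes the super-bracket in $\bU(\hg)$.

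I would then expand $[e_{i_l}(m),\bar f]$ by the super-Leibniz rule. Each first-order term replaces a single factor $F_k$ of $\bar f$ by $[e_{i_l}(m),F_k]$, which by the Section~1 bracket relations (and the fact that for simple roots the odd-odd anticommutator $[e_{i_l},f_{i_j}]_1$ vanishes when $i_l\neq i_j$, by a direct matrix computation) is nonzero only when $i_k=i_l$, in which case it equals $h_{i_l}(m+m_k)$. The parity of $m$ is chosen so that the required brackets take the form written in Section~1: $m$ odd in part (a) and $m$ even in part (b). Taking $|m|\gg 0$ further eliminates the spurious central $K$-terms (which appear in odd-odd brackets only when the sum of loop indices vanishes) and ensures all newly generated time-indices are distinct from those present in $\bar f$, placing the resulting monomials unambiguously in the ordered basis $B(\bU(L(\fu^-)))$.

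The next step is to commute each generated $h_{i_l}(m+m_k)$ rightward past the surviving factors until it can act on $v$. Passing an even factor $f_{i_\xi}(m_\xi)$ uses $[h_{i_l},f_{i_\xi}]_0=-\alpha_{i_\xi}(h_{i_l})f_{i_\xi}$ and contributes the degree-preserving term $-\alpha_{i_\xi}(h_{i_l})f_{i_\xi}(m+m_k+m_\xi)$; passing an odd factor $f_{i_\xi'}(m_\xi')$ uses the odd-odd formula $[h_{i_l},f_{i_\xi'}]_1=\bar\alpha_{i_\xi'}(h_{i_l})f_{i_\xi'}$ and additionally picks up a super-sign $-1$ in part (b), since both $h_{i_l}(m+m_k)$ and $f_{i_\xi'}(m_\xi')$ are odd. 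Collecting these contributions gives the summands of the statement: the $-p_j(p_j-1)f_{i_j}(m+2m_j)$ term arises from $\xi=j$ among the $p_j$ copies of $f_{i_j}(m_j)$ (with $\binom{p_j}{2}$ unordered pairs and $\alpha_{i_l}(h_{i_l})=2$); the $p_jp_\xi\alpha_{i_\xi}(h_{i_l})$ term from ordered pairs of distinct factors in $\bar f_0$; and the $(-1)^{\xi-(j+1)}$ and $(-1)^{r'-j}$ signs in part (b) from successive super-anticommutations of the odd element $h_{i_l}(m+m_j')$ past the odd factors of $\bar f_1$.

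The main obstacle is the bookkeeping: verifying that all contributions of $f$-degree strictly less than $p-1$ (resp.\ $p+r'-1$)---arising from subsequent reordering of the newly introduced $f$-factors against the old ones (whose brackets can produce central $K$-terms when loop indices sum to zero, and otherwise degree-preserving $f$-terms that must be checked to match those already listed) and from any leftover rearrangements---indeed land in the stated error modules, and that super-signs are tracked consistently through every odd-odd passage in part (b). This is routine but combinatorially delicate, and is most cleanly organized as an induction on $\deg\bar f=p+r'$.
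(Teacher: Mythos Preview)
Your outline follows the same route as the paper: apply the super-Leibniz rule to $e_{i_l}(m)\bar f$, use that for distinct simple roots $[e_{i_l},f_{i_j}]_0=0$ so only the factors with $i_j=i_l$ survive, and then push each resulting $h_{i_l}(\cdot)$ to the right past the remaining $f$'s. The commutator identifications you list (the $-p_j(p_j-1)$, $p_jp_\xi\alpha_{i_\xi}(h_{i_l})$, and $\bar\alpha$-terms with their signs) are exactly the ones the paper records.

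There is, however, one genuine gap: you never use the hypothesis $\det D_\delta^X(\lambda)\neq 0$, and it is not ``bookkeeping''. In part~(b) take an \emph{even} factor $f_{i_j}(m_j)$ with $i_j=i_l$. The Leibniz step produces $h_{i_l}(m+m_j)$ with $m+m_j\in 2\Z$; after commuting it all the way to the right you are left with the term
\[
p_j\,\bar f^{\hat j}\,\bigl(h_{i_l}(m+m_j)v\bigr).
\]
Since $h_{i_l}(m+m_j)\in L(\fh)\subseteq\hm$, the vector $h_{i_l}(m+m_j)v$ lies in $L(\hm,\lambda)$, so this term has $f$-degree $p+r'-1$ and is \emph{not} absorbed by $\bU(L(\fu^-))_{(p+r'-2)}\otimes L(\hm,\lambda)$. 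Yet the displayed congruence in~(b) omits it. The paper disposes of this term by observing that $h_{i_l}(m+m_j)$ lies either in $\cH_0^+$ (if $m+m_j>0$) or in the augmentation ideal $\cS^+\subseteq\bU(\cH_0^-)$ (if $m+m_j<0$), and both of these annihilate $L(\hm,\lambda)$. The second assertion is exactly where $\det D_\delta^X(\lambda)\neq 0$ enters: it is the content of the earlier proposition that the maximal submodule of $M(\hm,\hk;L(\hk,\lambda))$ is $\cS^+M$, whence $\cS^+L(\hm,\lambda)=0$. Without invoking this, your argument does not establish the formula in~(b). (Note also that with the parities of $m$ chosen as in the statement, the product of loop indices in $[e_{i_l}(m),f_{i_j}(m_j)]$ is always even, so $[\,\cdot\,,\,\cdot\,]_1$ never arises at the first step; your remark about $[e_{i_l},f_{i_j}]_1=0$ is correct but unnecessary.)
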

\begin{proof}
We prove part (b) first, as part (a) follows from it. Choose $0\gg m\in 2\Z$ or $0\ll m\in 2\Z$ such that $h_{i_l}(m+m_j')$ is in $B(\cH_{1}^-)$.  Since $\ad (e_{i_l}(m))$ is a derivation of even degree, we have that
\begin{multline*}
e_{a_l}(m){\bar f} v = \sum_{j=1}^r \sum_{\gamma =0}^{p_j-1} f_{i_1}(m_1)^{p_1}\cdots f_{i_j}(m_j)^{\gamma}[e_{i_l}, f_{i_j}]_0(m+m_j)f_{i_j}(m_j)^{p_j-\gamma-1}\cdots f_{i_r}(m_r)^{p_r}{\bar f}_1v \\
+ \sum_{j=1}^{r'} {\bar f}_0 f_{i_1'}(m_1')\cdots f_{i_{j-1}'}(m_{j-1}')[e_{i_l}, f_{i_j'}]_0(m+m_j')f_{i_{j+1}'}(m_{j+1}') \cdots f_{i_{r'}'}(m_{r'}')v \\
= \sum_{\substack{1\leq j\leq r \\ i_j = i_l}}^r \sum_{\gamma =0}^{p_j-1} f_{i_1}(m_1)^{p_1}\cdots f_{i_j}(m_j)^{\gamma}h_{i_j}(m+m_j)f_{i_j}(m_j)^{p_j-\gamma-1}\cdots f_{i_r}(m_r)^{p_r}{\bar f}_1v \\
+ \sum_{\substack{1\leq j\leq r' \\ i_j = i_l}}^{r'} {\bar f}_0 f_{i_1'}(m_1')\cdots f_{i_{j-1}'}(m_{j-1}')h_{i_j}(m+m_j')f_{i_{j+1}'}(m_{j+1}') \cdots f_{i_{r'}'}(m_{r'}')v \\
\equiv \biggl( \sum_{\substack{1\leq j\leq r \\ i_j = i_l}}^r -p_j(p_j-1) f_{i_j}(m+2m_j){\bar f}^{{\hat j}{\hat j}}v + \sum_{\substack{1\leq j\leq r \\ i_j = i_l}}^r \sum_{\xi =j+1}^{r} p_jp_\xi \alpha_{i_\xi}(h_{i_l}) f_{i_\xi}(m_j + m_\xi + m){\bar f}^{{\hat j}{\hat \xi}}v \\
+ \sum_{\substack{1\leq j\leq r \\ i_j = i_l}}^r p_j {\bar f}^{\hat j} h_{i_l}(m+m_j)v \biggr)  + \biggl( \sum_{\substack{1\leq j\leq r' \\ i_j = i_l}}^{r'} \sum_{\xi =j+1}^{r'} (-1)^{\xi - (j+1)}{\bar \alpha}_{i_\xi}(h_{i_l}){\bar f}_0 f_{i_\xi}(m_j' + m_\xi' + m) {\bar f}_1^{{\hat j}{\hat \xi}} v +\\
 \sum_{\substack{1\leq j\leq r' \\ i_j = i_l}}^{r'} (-1)^{(r'-j)} {\bar f}^{\hat j} h_{i_l}(m+m_j')v \biggr)  \mod \bU(L(\fu^-))_{(p+r'-2)}\otimes L(\hm, \lambda) \\
 \equiv \biggl( \sum_{\substack{1\leq j\leq r \\ i_j = i_l}}^r -p_j(p_j-1) f_{i_j}(m+2m_j){\bar f}^{{\hat j}{\hat j}}v + \sum_{\substack{1\leq j\leq r \\ i_j = i_l}}^r \sum_{\xi =j+1}^{r} p_jp_\xi \alpha_{i_\xi}(h_{i_l}) f_{i_\xi}(m_j + m_\xi + m){\bar f}^{{\hat j}{\hat \xi}}v \biggr) \\  
+ \biggl( \sum_{\substack{1\leq j\leq r' \\ i_j = i_l}}^{r'} \sum_{\xi =j+1}^{r'} (-1)^{\xi - (j+1)}{\bar \alpha}_{i_\xi}(h_{i_l}){\bar f}_0 f_{i_\xi}(m_j' + m_\xi' + m) {\bar f}_1^{{\hat j}{\hat \xi}} v \\
+ \sum_{\substack{1\leq j\leq r' \\ i_j = i_l}}^{r'} (-1)^{(r'-j)} {\bar f}^{\hat j} h_{i_l}(m+m_j')v \biggr)  \mod \bU(L(\fu^-))_{(p+r'-2)}\otimes L(\hm, \lambda),
\end{multline*}
where the first equivalence follows from the fact that $\ad (h_{i_l}(m+m_j))$ is an even derivation, $\ad(h_{i_j}(m+m_j'))$ is an odd derivation, and $f_{i_\xi'}(m_\xi')$ is an odd element for any $m_\xi'$; and the second equivalence follows from the fact that $h_{i_l}(m+m_j)v =0$ for all $1\leq j\leq r$, since either $h_{i_l}(m+m_j)\in \cH_{0}^+$ that implies $h_{i_l}(m+m_j)v =0$, or $h_{i_l}(m+m_j)\in \cS^+$, and hence $h_{i_l}(m+m_j)v$ lies in the maximal proper submodule of $M(\hk, \lambda)$.

For part (a), we notice that the second parentheses above does not appear in the expression of $e_{a_l}(m){\bar f} v$. Moreover, despite the fact that $\ad(e_{i_l}(m))$ and $\ad(h_{i_l}(m+m_k))$ are odd derivations (as $m\in \{2\Z+1\}$ and $m_j\in 2\Z$ for all $1\leq j\leq r$), they behave as regular derivations when applied on factors of ${\bar f}_0$, since $m_j\in 2\Z$ for all $1\leq j\leq r$. Thus the proof follows from the above equation.
\end{proof}

We now state our key result.

\begin{theo}\label{thm-main}
If $\det D_\delta^X(\lambda)\neq 0$, then $M(\hg, \hm; L(\hm, \lambda))$ is irreducible.
\end{theo}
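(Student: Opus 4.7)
My plan is to show that every nonzero submodule $N$ of $M := M(\hg,\hm;L(\hm,\lambda))$ equals $M$. Because $L(\hm,\lambda)$ is irreducible as an $\hm$-module and $M = \bU(L(\fu^-))\,L(\hm,\lambda)$, it is enough to produce a nonzero element of $N\cap L(\hm,\lambda)$: the irreducibility will force $L(\hm,\lambda)\subseteq N$, and then $\bU(\hg)\,L(\hm,\lambda) = M$ gives $N = M$. Using Corollary~\ref{cor:L(m,lambda)-free-U(H_1^-)} and the PBW theorem, each element of $M$ has a unique expansion in the ordered basis $\{f_{\bi,2\bm,\bp}f_{\bi',\bm'}v_j\}$ constructed just before Lemma~\ref{lem:comm.rel.basis}, and I will attach to each nonzero $v\in M$ the \emph{complexity} triple (maximum $\bU(L(\fu^-))$-degree $p+r'$ occurring in its expansion; maximal basis monomial of that degree in the reverse-lexicographic order; index $j$ of the accompanying coefficient vector), ordered lexicographically.

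Choose $0\neq v\in N$ of minimal complexity. If the top degree is $0$, then $v\in L(\hm,\lambda)$ and we are finished. Otherwise, let $\bar f = \bar f_0\bar f_1$ be the maximal PBW monomial in the expansion of $v$, let $u\in L(\hm,\lambda)$ be the coefficient of $\bar f$, and pick a simple factor $f_{i_l}(m_l)$ of $\bar f$. Apply $e_{i_l}(m)\in L(\fu^+)$ where the parity of $m$ is chosen in accordance with Lemma~\ref{lem:comm.rel.basis} (odd if $\deg\bar f_1 = 0$, even otherwise) and $|m|$ is taken large enough that every resulting shifted factor $h_{i_l}(m+m_j)$ or $h_{i_l}(m+m_j')$ lies in the basis $B(\cH_1^-)$, is strictly larger than every $h$-generator appearing in the $\Lambda(\cH_1^-)$-part of any coefficient vector of $v$, and is moreover distinct from all such shifted factors produced from any other basis monomial of $v$. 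Lemma~\ref{lem:comm.rel.basis} then expresses $e_{i_l}(m)\,\bar f u$, modulo $\bU(L(\fu^-))_{(p+r'-2)}\otimes L(\hm,\lambda)$, as a sum whose dominant summand $\bar f^{\hat l}\, h_{i_l}(m+m_l)\,u$ sits in $\bU(L(\fu^-))$-degree $p+r'-1$.

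The hard step is to show that $e_{i_l}(m)v$ is nonzero with strictly smaller complexity than $v$. Applying $e_{i_l}(m)$ to any other basis monomial of $v$ produces terms of $\bU(L(\fu^-))$-degree at most $p+r'-1$; the reverse-lex maximality of $\bar f$, together with the genericity of the chosen $m$, ensures that the leading contributions from those other monomials are strictly lower than $\bar f^{\hat l}\, h_{i_l}(m+m_l)\,u$ in our complexity order, so no cancellation occurs at the top. That $h_{i_l}(m+m_l)u\neq 0$ in $L(\hm,\lambda)$ is exactly the place where $\det D_\delta^X(\lambda)\neq 0$ enters, via Corollary~\ref{cor:L(m,lambda)-free-U(H_1^-)}: because $L(\hm,\lambda)\cong\Lambda(\cH_1^-)\otimes_\C L(\hk,\lambda)$ as vector spaces, left multiplication by a fresh generator $h_{i_l}(m+m_l)\in\cH_1^-$ is injective. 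Thus $e_{i_l}(m)v\in N$ is nonzero and strictly simpler than $v$, contradicting the minimality of $v$ and completing the proof.
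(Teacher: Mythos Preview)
Your overall strategy---show $N\cap L(\hm,\lambda)\neq 0$ by repeatedly hitting $v$ with suitable $e_{i_l}(m)$ to lower the $\bU(L(\fu^-))$-degree---matches the paper's, but you have omitted a step that the paper treats carefully and that your argument cannot do without. You write ``pick a simple factor $f_{i_l}(m_l)$ of $\bar f$'' and then invoke Lemma~\ref{lem:comm.rel.basis}. However, that lemma explicitly assumes that \emph{all} factors of $\bar f$ are simple root vectors, not just one; and there is no reason the maximal monomial of an arbitrary $v\in N$ should have this property (for instance, $\bar f$ could be a power of $f_{\alpha_1+\alpha_2}(m)$ with $\alpha_1+\alpha_2\in\dot\Delta(\fu^-)$). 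When some $f_{i_j}$ is non-simple, commuting $e_{i_l}(m)$ past it produces $[e_{i_l},f_{i_j}]_0(m+m_j)$, which can land back in $L(\fu^-)$ and contribute terms of the \emph{same} $\bU(L(\fu^-))$-degree, so your claimed dominant summand $\bar f^{\hat l}h_{i_l}(m+m_l)u$ need not control the top.

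The paper closes this gap with a preliminary reduction: for the factor $f_{i_k}$ of smallest (most negative) root among the top-degree monomials of $v$, it uses \cite[Lemma~4.2]{Cox94} to choose $z\in\fn^+$ with $0\neq [z,f_{i_k}]\in\fu^-$, applies $z(m)$ for $|m|\gg 0$ even, and checks that the resulting element of $N$ is nonzero with maximal monomial of the same degree but with strictly ``shorter'' factors. Iterating, one reaches a nonzero element of $N$ whose top-degree monomials have only simple factors, and only then does Lemma~\ref{lem:comm.rel.basis} apply to drop the degree. You should insert this reduction before invoking the lemma; once it is in place, the rest of your outline (the injectivity of multiplication by a fresh $h_{i_l}(m+m_l')\in\cH_1^-$ via Corollary~\ref{cor:L(m,lambda)-free-U(H_1^-)}, and the non-cancellation at the top) is essentially the paper's own endgame.
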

\begin{proof}
We claim that any non-trivial submodule $N$ of $M(\hg, \hm; L(\hm, \lambda))$ intersects $L(\hm, \lambda)$ non-trivially. Assuming that the claim holds, the result follows from the simplicity of $L(\hm, \lambda)$. 

To prove the claim, let $0\neq v\in N_\mu$, and let ${\bar f}_{\max} x_{\max} = f_{\ba, 2\bb, \bc} f_{\ba', \bb'} x_d$ be  the maximal monomial occurring in $v$. We now reduce the proof to the case where all  factors $f_{i_j}$ of maximal degree monomials occurring in $v$ are simple root vectors. Indeed, consider all factors $f_{i_j}$ that occur in the monomials of maximal degree of $v$, and let $f_{i_k}$ be the minimal among them (i.e. its associated root $\alpha_{i_k}$ is such that $|\alpha_{i_k}|$ is maximal among them). Let ${\bar f}_{\min}x_{\min} = f_{\bd, 2\bg, \bk} f_{\bd', \bg'} h_{\bd'', \bg''}x_{\min} = {\bar f}_{0,\min} {\bar f}_{1,\min}x_{\min}$ be an element (occurring in $v$) of maximal degree having $f_{i_k}$ as a factor, and let $z\in \fn^+ = \fm^+\oplus \fu^+$ be such that $0\neq [z, f_{i_k}]\in \fu^-$ (such $z$ exists by \cite[Lemma~4.2]{Cox94}). Let $J_{\min}$ the set of indexes $j$ for which $f_{i_j}$ is a factor of ${\bar f}_{\min}$ and $[z, f_{i_j}]\in \fu^-$. Let $0\gg m\in 2\Z$ (if $z\in \fu^+$) or $0\ll m\in 2\Z$ (if $z\in \fm^+$) (here $m\ll 0$ (resp. $m\gg 0$) means $m$ so that for every fixed $j$, $m+m_j\notin \{g_k,\ g_l'\mid 1\leq k\leq r,\ 1\leq l\leq r'\}$). Then, using that $\ad(z(m))$ is an even derivation, we obtain
\begin{multline*}
z(m){\bar f}_{\min} x_{\min} =\\
= \sum_{j=1}^r \sum_{\gamma =0}^{k_j-1} f_{d_1}(g_1)^{k_1}\cdots f_{d_j}(g_j)^{\gamma}[z, f_{d_j}]_0(m+g_j)f_{d_j}(g_j)^{k_j-\gamma-1}\cdots f_{d_r}(g_r)^{k_r}{\bar f}_{1,\min}x_{\min} \\
+ \sum_{j=1}^{r'} {\bar f}_{0,\min} f_{d_1'}(g_1')\cdots f_{d_{j-1}'}(g_{j-1}')[z, f_{d_j'}]_0(m+g_j')f_{d_{j+1}'}(g_{j+1}') \cdots f_{d_{r'}'}(g_{r'}')x_{\min} \\
 \equiv \sum_{j\in J_-} k_j [z, f_{i_j}]_0(m+g_j){\bar f}_{\min}^{\hat j}x_{\min} \\
+ \sum_{j\in J_-} (-1)^{j-1}{\bar f}_{0,\min}[z, f_{i_j}]_0(m+g_j'){\bar f}_{1,\min}^{\hat j}x_{\min}  \mod \bU(L(\fu^-))_{(k+d'-1)}\otimes L(\hm, \lambda),
\end{multline*}
where $k+d'= \deg{\bar f}_{\min}$. Now if $S_1$ denote this summation, then it is nonzero since $[z, f_{i_k}]_0\neq 0$ and $m+m_j\notin \{g_k,\ g_l'\mid 1\leq k\leq r,\ 1\leq l\leq r'\}$. Moreover, if ${\bar f} x = {\bar f}_0{\bar f}_1x$ is a different monomial occurring in $v$, then, similarly we have that
\begin{multline*}
z(m){\bar f}x \equiv \sum_{j\in J_-} p_j [z, f_{i_j}]_0(m+m_j){\bar f}^{\hat j}x \\
+ \sum_{j\in J_-} (-1)^{j-1}{\bar f}_{0}[z, f_{i_j}]_0(m+m_j'){\bar f}_{1}^{\hat j}x  \mod \bU(L(\fu^-))_{(p+r-1)}\otimes L(\hm, \lambda),
\end{multline*}
where $p + r = \deg {\bar f}$. Since ${\bar f}_{\min}$ has maximal degree among  monomials in $v$, we have that $p+r\leq k + d'$. Hence, if $T_1$ is the summation above, then $S_1\notin \LinSpan (T_1) + \bU(L(\fu^-))_{(p-1)}\otimes L(\hm, \lambda)$, since this could happen only if $p+r=k+d'$; $\C[z,f_{i_j}]_\ell = \C[z, f_{i_l}]_\ell$ for $\ell = 0,1$; $m_j = g_l$; ${\bar f}_{\min}^{\hat j} = {\bar f}^{\hat l}$; and $x_{\min}=x$. But this would imply ${\bar f}_{\min} = {\bar f}$, and $x_{\min}=x$, which contradicts the fact that ${\bar f} x\neq {\bar f}_{\min} x_{\min}$.

We may now assume that factors of all maximal degree monomials occurring in $v$ are simple. In particular, this is the case for 
	\[
{\bar f}_{\max} x_{\max} =  {\bar f}_{0,\max} {\bar f}_{1, \max} x_{\max} =  f_{a_1}(b_1)^{c_1}\cdots f_{a_s}(b_s)^{c_s} f_{a_1'}(b_1')\cdots f_{a_{s'}}(b_{s'})x_{\max}.
	\] 
Moreover, we may also assume that $\deg f_{1,\max}\geq 1$ (as otherwise the proof is the same as that of \cite[Proposition~4.5]{Cox94}, using Lemma~\ref{lem:comm.rel.basis} and a suitable $e\in \{2\Z + 1\}$ in his notation). By Lemma~\ref{lem:comm.rel.basis}, for each simple root factor $f_{a_l}$ of ${\bar f}_{\max}$, there is $0\gg m\in 2\Z$ or $0\ll m\in 2\Z$ for which 
\begin{multline*}
e_{a_l}(m){\bar f}_{\max}x_{\max}\equiv \biggl( \sum_{\substack{1\leq j\leq s \\ i_j = i_l}}^s -c_j(c_j-1) f_{a_j}(m+2b_j){\bar f}_{\max}^{{\hat j}{\hat j}}x_{\max}  \\
+ \sum_{\substack{1\leq j\leq s \\ i_j = i_l}}^r \sum_{\xi =j+1}^{r} c_jc_\xi \alpha_{a_\xi}(h_{a_l}) f_{a_\xi}(b_j + b_\xi + m){\bar f}_{\max}^{{\hat j}{\hat \xi}}x_{\max}  \biggr)  \\
+ \biggl( \sum_{\substack{1\leq j\leq s' \\ i_j = i_l}}^{s'} \sum_{\xi =j+1}^{s'} (-1)^{\xi - (j+1)}{\bar \alpha}_{a_\xi}(h_{a_l}){\bar f}_{0, \max} f_{a_\xi}(b_j' + b_\xi' + m) {\bar f}_{1,\max}^{{\hat j}{\hat \xi}} x_{\max} \\
+ \sum_{\substack{1\leq j\leq s' \\ i_j = i_l}}^{s'} (-1)^{(s'-j)} {\bar f}_{\max}^{\hat j} h_{i_l}(m+b_j')x_{\max} \biggr) \mod \bU(L(\fu^-))_{(c+s'-2)}\otimes L(\hm, \lambda)
\end{multline*}
Finally, it is not hard to prove that for any fixed index $l$, the summand 
	\[
w_l = {\bar f}_{\max}^{\hat l}h_l(m+b_l') x_{\max}
	\] 
is not in the $\LinSpan$ of the remaining monomials occurring in $e_l(m) v$. Therefore, $e_l(m)v\neq 0$, the maximal monomial occurring in $e_l(m)v$ has degree less than that of the maximal monomial occurring in  $v$, and thus the result follows by induction.
\end{proof}

Applying Theorem \ref{thm-main} in the case $X=\emptyset$ gives:

\begin{cor}\label{cor:X=empty-irr.criterion}
If $\det D_\delta^\emptyset (\lambda)\neq 0$, then $M(\hg, \cH; L(\cH, \lambda))$ is irreducible. 
\end{cor}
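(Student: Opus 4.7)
The plan is to deduce this by specializing Theorem~\ref{thm-main} to $X=\emptyset$ and reconciling the resulting induced module with $M(\hg,\cH;L(\cH,\lambda))$. I would begin by unwinding the definitions at $X=\emptyset$: since $\dot{\D}(\emptyset)=\emptyset$, one has $\fm(\emptyset)=\fh$, $\fk(\emptyset)=0$, $\fh^{\emptyset}=\fh$, and $\fu(\emptyset)^{\pm}$ equals the span of all positive/negative root spaces of $\g$. Consequently $\hk(\emptyset)=\hh$, $\hm(\emptyset)=L(\fh)\oplus \C K\oplus \C D$ and $\cH(\emptyset)=L(\fh)\oplus \C K$. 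In particular $\hm(\emptyset)=\cH(\emptyset)\oplus \C D$ as vector spaces, and the only difference between the ``imaginary Heisenberg'' $\cH$ and the full Levi-like piece $\hm$ is the grading operator $D$.

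Next I would identify the simple $\hm$-module $L(\hm(\emptyset),\lambda)$ with the simple $\cH$-module $L(\cH(\emptyset),\lambda)$. Since $\hk(\emptyset)=\hh$, the module $L(\hk(\emptyset),\lambda)$ is the one-dimensional module $\C v_\lambda$, and Corollary~\ref{cor:L(m,lambda)-free-U(H_1^-)} yields $L(\hm(\emptyset),\lambda)\cong \Lambda(\cH_{1}^{-})$ via the cyclic vector $v_\lambda$. On the other hand, Proposition~\ref{prop:ideals.S.bij.subm.M(H)} gives $L(\cH(\emptyset),\lambda)\cong \Lambda(\cH_{1}^{-})$ through the same cyclic vector. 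These identifications agree as $\cH(\emptyset)$-modules, and the $D$-action on $L(\hm(\emptyset),\lambda)$ merely records the loop grading, which canonically extends the $\cH$-module structure on $L(\cH(\emptyset),\lambda)$.

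Third, I would observe that $M(\hg,\hm(\emptyset);L(\hm(\emptyset),\lambda))\cong M(\hg,\cH;L(\cH,\lambda))$: both are of the form $\bU(\hg)\otimes_{\bU(\mathfrak{r})}(-)$ for the same subalgebra $\mathfrak{r}=\cH(\emptyset)+\C D+L(\fu^{+})=\hm(\emptyset)+L(\fu^{+})$ (with $L(\fu^{+})$ acting trivially), and the two ``coefficient'' modules are canonically identified by the previous step. The conclusion then follows immediately from Theorem~\ref{thm-main} applied with $X=\emptyset$ under the hypothesis $\det D_\delta^{\emptyset}(\lambda)\neq 0$.

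I do not anticipate a real obstacle; the argument is essentially bookkeeping around the extra copy of $\C D$ and the collapse $\hk(\emptyset)=\hh$, which forces $L(\hk(\emptyset),\lambda)$ to be one-dimensional. The only point worth verifying is the compatibility between the $\hm$-module and $\cH$-module formulations of the ``coefficient'' module, but this is automatic: $D$ acts diagonally with integer eigenvalues determined by the loop grading, so its action on $L(\cH,\lambda)$ is uniquely prescribed and extends the $\cH$-action.
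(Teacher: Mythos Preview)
Your proposal is correct and takes essentially the same approach as the paper: the paper's proof is the single line ``Applying Theorem~\ref{thm-main} in the case $X=\emptyset$,'' and you have simply spelled out the bookkeeping (the collapse $\hk(\emptyset)=\hh$, $\hm(\emptyset)=\cH(\emptyset)\oplus\C D$, and the identification of the coefficient modules) that justifies this specialization.
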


\begin{rem}
Notice that differently from the other cases studied in the literature, we do not need the central charge to be nonzero in order to have $M(\hg, \hm; L(\hm, \lambda))$ to be irreducible (compare with \cite{Cox94, Fut94, CF18}). This is due to the fact that the central element $K$ does not play a role in the action of the imaginary subalgebra $\cH$ on $L(\hm, \lambda)$. On the other hand, the condition $\det D_\delta^X(\lambda)\neq 0$ is essential in our context. Without this condition we do not necessarily have that $L(\hm, \lambda)\cong \Lambda(\cH_{1}^-)\otimes_\C L(\hk, \lambda)$ as vector spaces (see Corollary~\ref{cor:L(m,lambda)-free-U(H_1^-)}).
\end{rem}

\medskip

\noindent {{\bf Acknowledgment.} Second author is grateful to the University of California Berkeley for hospitality. The authors are very grateful to Vera Serganova for very stimulating discussions.


\end{document}